\numberwithin{equation}{section}
\newcommand{\To}{\rightarrow}
\newtheorem{theorem}{Theorem}[section]
\newtheorem{corollary}[theorem]{Corollary}
\newtheorem{lemma}[theorem]{Lemma}
\newtheorem{proposition}[theorem]{Proposition}
\theoremstyle{definition}
\newtheorem{definition}[theorem]{Definition}
\newtheorem{remark}[theorem]{Remark}
\theoremstyle{plain}
\theoremstyle{definition}
\numberwithin{equation}{section}
\begin{document}

\title{Gorenstein injectivity of the section functor}
\author{Reza Sazeedeh}

\address{Department of Mathematics, Urmia University, P.O.Box: 165, Urmia, Iran-And\\
School of Mathematics, Institute for Research in Fundamental Sciences (IPM),
P. O. Box: 19395-5746, Tehran, Iran} \email{rsazeedeh@ipm.ir}

\subjclass[2000]{18E30, 16E40, 16E05, 13D05}

\keywords{Strongly cotorsion, Gorenstein injective, Gorenstein flat,
Local cohomology.\\This research was in part supported by a grant from IPM (No. 87200025).}

\begin{abstract}

Let $R$ be a commutative Noetherian ring of Krull
dimension $d$ admitting a dualizing complex $D$ and let $\frak a$ be
any ideal of $R$, we prove that $\Gamma_{\frak a}(G)$ is Gorenstein
injective for any Gorenstein injective $R$-module $G$.
Let $(R,\frak m)$ be a local ring and $M$ be a finitely generated $R$-module.
 We show that ${\rm Gid}{\bf R}\Gamma_{\frak
m}(M)<\infty$ if and only if ${\rm Gid}_{\hat{R}}(M\otimes_R\hat{R})<\infty$. We also show that if ${\rm Gfd}_R{\bf
R}\Gamma_{\frak m}(M)<\infty$, then ${\rm Gfd}_RM<\infty$. Let $(R,\frak m)$ be a Cohen-Macaulay local ring and $M$ be a Cohen-Macaulay module of dimension $n$. We prove that if $H_{\frak m}^n(M)$ is of finite G-injective dimension, then Gid$_RH_{\frak m}^n(M)=d-n$. Moreover, we
prove that if $M$ is a Matlis reflexive strongly torsion free module of finite G-flat dimension, then Gfd$_R\hat{M}<\infty$, where $\hat{M}$
is $\frak m$-adic completion.
\end{abstract}

\maketitle

\tableofcontents

\section{Introduction}
\hspace{0.4cm} Throughout this paper, $R$ is a commutative
Noetherian ring (with identity). Grothendieck local cohomology
theory is an effective tool for mathematicians working in the theory
of commutative algebra and in algebraic geometry. There are several
ways to compute these cohomology modules. For instance, following
[BS], they can be computed by use of the right derived functors of
the $\frak a$-torsion functor $\Gamma_{\frak
a}(-)=\bigcup_{n\in\mathbb{N}}(0:_{(-)}{\frak a}^n)$, where $\frak
a$ is an arbitrary ideal of $R$. One of the main theorem in local
cohomology subject is Grothendieck's local duality theorem. Let
$(R,\frak m)$ be a local ring of dimension $d$ admitting a dualizing
module $\omega$. The local duality theorem establishs a relation
between the functors $\Gamma_{\frak m}(-)$ and Hom$_R(-,\omega)$. So
one may study the homological dimensions of $R$-modules in terms of
the study of local cohomology.

\hspace{0.4cm} The author in [S3] studied the effect of the section
functor $\Gamma_{\frak a}(-)$ on the Auslander and Bass classes of
$R$-modules, specially when the module was Gorenstein injective,
Gorenstein flat or maximal Cohen-Macaulay and $\frak a$ was any
ideal of $R$. In this paper we will continue this argument for some
$R$-modules which are more general.

\hspace{0.4cm} In this paper we show that if $R$ admits a dualizing complex $D$ and $\frak a$ is any ideal of $R$
, then  $\Gamma_{\frak a}(G)$ is Gorenstein injective for any Gorenstein injective module $G$ (see Theorem 3.2).
  As a conclusion of this theorem we show that if $X$ is a homologically left bounded complex in $D(R)$ and
  of finite Gorenstein injective dimension, then ${\rm Gid}_R{\bf R}\Gamma_{\frak a}(X)\leq {\rm Gid}_R X$ (see Corollary 3.3). We also prove that $-{\rm inf}{\bf R}\Gamma_{\frak a}(X)\leq {\rm Gid}_R X$  for any homologically left bounded complex $X$ (see Theorem 3.4).

  Let $(R,\frak m)$ be a local ring. Another principal aim of this paper is to study of the Gorenstein homological
dimension of finitely generated $R$-modules in terms of the
Gorenstein homological dimension of its local cohomology modules at
the maximal ideal $\frak m$. We show that if $M$ is a finitely
generated $R$-module, then ${\rm Gid}_R{\bf R}\Gamma_{\frak
m}(M)<\infty$ if and only if ${\rm Gid}_{\hat{R}}(M\otimes_R\hat{R})<\infty$.
(see Theorem 3.3). In particular, if $M$ is a non-zero cyclic $R$-module such that ${\rm Gid}_R{\bf R}\Gamma_{\frak
m}(M)<\infty$, then $R$ is Gorenstein (see
Corollary 3.8). We also show that if $M$ is a finitely
generated $R$-module such that ${\rm Gfd}_R{\bf R}\Gamma_{\frak
m}(M)<\infty$, then ${\rm Gfd}_{R}M<\infty$ (see Theorem 3.9).

\hspace{0.4cm} Assume that $(R,\frak m)$ is a
Cohen-Macaulay local ring of dimension $d$. We shall prove that if $M$ is a Cohen-Macaulay
$R$- module of Krull dimension $n$ such that $H_{\frak m}^n(M)$ is
of finite Gorenstein injective dimension, then ${\rm Gid}_R(H_{\frak
m}^n(M))=d-n$ (see Theorem 3.10). In the end of this paper, we prove that
if $M$ is a Matlis reflexive strongly
torsion free $R$-module with ${\rm Gfd}_RM<\infty$, then ${\rm Gfd}_{\hat{R}}\hat{M}<\infty$ (see Theorem 3.11). As a conclusion of this theorem, we prove that if ${\rm Gfd}_RM<\infty$, then ${\rm Gid}_R({\rm
Hom}_R(\hat{M},\omega))<\infty$, where $\omega$ is the dualizing module of $\hat{R}$.

\section{Preliminaries}

\hspace{0.4cm}In this section we recall some definitions that we use later.\\

\begin{definition}
Xu [Definitions 5.4.2 and 5.4.1] has introduced the notion of a
strongly torsion free and of a strongly cotorsion module. An
$R$-module $M$ is said to be {\it strongly torsion free (strongly
cotorsion)} if ${\rm Tor}_1^R(F,M)=0$ (${\rm Ext}_R^1(F,M)=0$) for
any $R$-module $F$ of finite flat dimension. One can easily show
that $M$ is strongly torsion free (strongly cotorsion) if ${\rm
Tor}_i^R(F,M)=0$ (${\rm Ext}_R^i(F,M)=0$) for any $i\geq 1$ and any
$R$-module $F$ of finite flat dimension.
\end{definition}

\medskip

\medskip

\begin{definition}
Following [EJ1], an $R$-module $N$ is said to be {\it Gorenstein
injective (or G-injective)} if there exists a
${\rm{Hom}}(\mathcal{I},-)$ exact exact sequence
$$\dots\To E_1\To E_0\To E^0\To E^1\dots$$ of injective
$R$-modules such that $N={\rm{Ker}}(E^0\To E^1)$. Dually, an
$R$-module $M$ is said to be {\it Gorenstein flat (or G-flat)} if
there exists an $\mathcal{I}\bigotimes-$ exact exact sequence
$$\dots\To F_1\To F_0\To F^0\To F^1\To \dots$$ of flat modules
such that $M={\rm{Ker}}(F^0\To F^1)$.
\end{definition}

\medskip
\begin{definition}
We can define the {\it G-injective dimension}
of an $R$-module $N$, Gid$_RN$ as follows\\
${\rm{Gid}}_RN=\inf\{n\in\mathbb{N}_0|$there exists a $\mathcal{GI}
-$resolution $N\To\mathbf{E}$ of length$\leq n\}$, where
$\mathcal{GI}$ denotes the class of all G-injective modules. We also
denote by $\widetilde{\mathcal{I}}$ ($\widetilde{\mathcal{GI}})$ the
class of all modules of finite injective dimension (finite
G-injective dimension). Dually, we can define the {\it G-flat
dimension}
of an $R$-module $M$, Gfd$_RM$ as follows\\
${\rm{Gfd}}_RM=\inf\{n\in\mathbb{N}_0|$there exists a $\mathcal{GF}
-$resolution $\mathbf{F}\To M$ of length$\leq n\}$, where
$\mathcal{GF}$ denotes the class of all G-flat modules and a
$\mathcal{GF} -$resolution $\mathbf{F}\To M$ of length$\leq n$ is an
exact sequence $0\To F_m\To F_{m-1}\To\dots\To F_0\To M\To 0$ of
$R$-modules such that each $F_i$ is G-flat and $m\leq n$. We also
denote by $\widetilde{\mathcal{F}}$ ($\widetilde{\mathcal{GF}})$ the
class of all modules of finite flat dimension (finite G-flat
dimension).\\
\end{definition}

\medskip

\begin{definition}
 Over a local ring $(R,\frak m)$ an $R$-module $M$ is said to be {\it
Matlis reflexive} if
$M\cong\mbox{Hom}_{R}(\mbox{Hom}_{R}(M,E(R/\frak m)),E(R/\frak m))$.

\hspace{0.4cm}It should be noted that if $M$ is a Matlis reflexive
$R$-module, then any submodule and any quotient of $M$ is Matlis
reflexive. Enochs in [E] proved that if the ring $R$ is local (not
necessarily complete) and $M$ is a Matlis reflexive, then it
contains a finitely generated submodule $S$ such that $M/S$ is
Artinian. In view of these arguments, one can easily show that if
$M$ is a Matlis reflexive $R$-module, then there is an isomorphism
$M\cong M\otimes_R\hat{R}$.
\end{definition}

\medskip
\begin{definition}
Let $(R,\frak m)$ be a local ring and let $M$ be an $R$-module.
Following [F], the
{\it width} of $M$ is \\
\begin{center}
width$_RM={\rm inf}\{m\in\mathbb{Z}|{\rm Tor}_m^R(k,M)\neq 0\}.$
\end{center}
In view of the Nakayama's lemma, on can easily see that if $M$ is a
non-zero finitely generated $R$-module, then width$_RM=0$.
\end{definition}

\medskip
\begin{definition}
Let $X$ be a complex and let $m$ be an integer. $\Sigma^m X$ denotes the complex $X$ shifted m degrees (to the left); it is given by
$(\Sigma^m X)_l=X_{l-m}$ and $\partial_l^{\Sigma^m X}=(-1)^m\partial_{l-m}^X$ for each $l\in\mathbb{Z}$. Also the supremum $\sup X$ and the infimum $\inf X$ of the complex $X$ are defined by, respectively, $$\sup X=\sup\{l\in\mathbb{Z}| {\rm H}_l(X)\neq 0\} \hspace{0.25cm} {\rm and}$$
$$\inf X=\inf\{l\in\mathbb{Z}| {\rm H}_l(X)\neq 0\}$$
where H$_l(X)$, $l$-th homology module of the complex $X$, is H$_l(X)={\rm Ker}\partial_l^X/{\rm Im}\partial_{l+1}^X$ for each $l\in\mathbb{Z}$. As the convention $\sup\emptyset=-\infty$ and $\inf\emptyset=\infty$. When H$_l(X)=0$ for all $l\in\mathbb{Z}$, we set $\sup X=-\infty$ and $\inf X=\infty$. We note that $\sup\Sigma^m X= \sup X+m$ and $\inf\Sigma^m X=\inf X+m$ for each $m\in\mathbb{Z}$.
\end{definition}

\medskip
\begin{definition}
Let $X$ and $Y$ be two complexes of $R$-modules such that $H(Y)$ is
right bounded. Then there exists a quasi-isomorphism (a homology
isomorphism) $P\rightarrow Y$ with $P$ a right bounded complex of
projective $R$-modules (the complex $P$ is usually called a {\it
projective resolution of $Y$}). The symbol ${\bf R}{\rm Hom}(Y,X)$
is defined to be the equivalent class (under quasi isomorphism) of
$R$-complexes represented by ${\rm Hom}_R(P,X)$, for any projective
resolution $P\rightarrow Y$. If $H(X)$ is left bounded, then there
exists a quasi isomorphism $X\rightarrow I$ with $I$ a bounded above
complex of injective $R$-modules( the complex $I$ is usually called
an {\it injective resolution of $X$}). So we can also define ${\bf
R}{\rm Hom}(Y,X)$ as the equivalent class (under quasi isomorphism)
of $R$-complexes represented by ${\rm Hom}_R(Y,I)$, for any
injective resolution $X\rightarrow I$. In particular if $X=N$ and
$Y=M$ be two $R$-modules, then we can apply each one of these
approaches and in this case, for each $i\geq 0$, we have ${\rm
Ext}_R^i(M,N)=H_{-i}({\bf R}{\rm Hom}(M,N))$
\end{definition}

\medskip
\begin{definition}
Let $R$ be a commutative Noetherian ring and let $D(R)$ denote the
derived category of $R$-complexes (see Hartshorn's book [Ha]). The
complex $D\in D(R)$ is a {\it dualizing complex} for $R$ if $D$ has
the following
conditions:\\
\begin{list}{}{\setlength{\rightmargin}{\leftmargin}}
\item[(i)] $D$ has finite homology modules.
\item[(ii)] $D$ has finite injective dimension.
\item[(iii)] The canonical morphism $\theta: R\To {\bf R}{\rm
Hom}_R(D,D)$ is an isomorphism.
\end{list}
A dualizing complex $D$ is said to be {\it normalized dualizing
complex} if sup$D={\rm dim}R$, where sup$D={\rm sup}\{n\in
\mathbb{Z}|H_n(D)\neq 0\}$
\end{definition}

\medskip
\begin{definition}
An $R$-module $\omega$ is said to be a dualizing module for $R$ if
$\omega$ is a dualizing complex (concentrated in degree zero) for
$R$.
\end{definition}
\medskip
Similar to the G-injective dimension of an $R$-module we can define
the G-injective dimension a complex as follows.

\medskip
\begin{definition}
Let $C_{\Box}(R)$ denote the category of all bounded $R$-complexes,
let $C_{\sqsubset}(R)$ denote the category of $R$-complexes bounded
at the left and let $D_{\sqsubset}(R)$ denote homologically
left-bounded complexes (see [F]). A complex $Y\in D_{\sqsubset}(R)$
is said to be of  {\it finite G-injective dimension} if there exists
a complex $B$ in $C_{\Box}(R)$ such that $Y\simeq B$ where the sign
$\simeq$ denotes the quasi isomorphism between two complexes. The
{\it G-injective dimension}, Gid$_RY$, of $Y\in D_{\sqsubset}(R)$ is
defined as
\begin{center}
Gid$_RY=\inf\{\sup\{l\in\mathbb{Z}|B_{-l}\neq 0\}| B\in
C_{\sqsubset}(R)$ is isomorphic to $Y$ in D(R)\\ and every $B_l$ is
G-injective$\}$
\end{center}
\end{definition}

\medskip
\begin{definition}
 Let $R$ be a Cohen-Macaulay local ring of Krull dimension
$d$ admitting a dualizing module $\omega$ and with residue field
$k$. Following [EJX], $\mathcal{G}_0(R)$ denote the class of
$R$-modules $M$ such that ${\rm Tor}_i^R(\omega,M)={\rm
Ext}_R^i(\omega,\omega\otimes_RM)=0$ for all $i>0$ and such that the
natural map $M\To {\rm Hom}(\omega,\omega\otimes_RM)$ is an
isomorphism. This class of $R$-modules is called {\it Auslander
class}. Also, $\mathcal{J}_0(R)$ denote the class of $R$-modules $N$
such that Ext$_R^i(\omega,N)={\rm Tor}_i^R(\omega,{\rm
Hom}_R(\omega,N))=0$ for all $i>0$ and such that the natural map
$\omega\otimes_R{\rm Hom}_R(\omega,N)\To N$ is an isomorphism. This
class of $R$-modules is called {\it Bass class}. It should be noted
that $\mathcal{G}_0(R)$ and $\mathcal{J}_0(R)$ are also called {\it
Foxby classes}.
\end{definition}

\medskip
\begin{remark}
 It follows from [EJ1, Proposition 10.4.23]
that $\mathcal{J}_0(R)=\widetilde{\mathcal{GI}}$. Moreover,
according to [EJ1, Proposition 10.4.17 and Corollary 10.4.29] we
have $\mathcal{G}_0(R)\subseteq\widetilde{\mathcal{GF}}$ and
according to [EJ1, Theorem 10.4.10 and Theorem 10.4.28] we have
$\widetilde{\mathcal{GF}}\subseteq \mathcal{G}_0(R)$. Therefore we
can deduce that  $\mathcal{G}_0(R)=\widetilde{\mathcal{GF}}$. When
$R$ is Gorenstein, each of the classes $\mathcal{G}_0(R)$ and
$\mathcal{J}_0(R)$ contains all $R$-modules.
\end{remark}

\section{The main results}

\hspace{0.4cm}It follows from the proof of [S1, Theorem 3.1] that
 if $\frak a$ is an ideal of a
commutative Noetherian ring $R$ (without any other conditions on
$R$) and $G$ is a G-injective $R$-module, then $H_{\frak a}^i(G)=0$
for all $i>0$. This fact leads us to the following proposition.

\medskip

\begin{proposition}
Let $R$ admit a dualizing complex $D$ and let $\frak a$ be an ideal
of $R$. If $G$ is a G-injective $R$-module, then ${\rm
Gid}_R(\Gamma_{\frak a}(G))<\infty$.
\end{proposition}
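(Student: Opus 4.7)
The plan is to reduce the claim to membership in the Bass class $\mathcal{B}_D(R)$ associated with the dualizing complex $D$. Over a ring admitting a dualizing complex, it is a classical result that a homologically bounded complex $X\in D(R)$ has finite Gorenstein injective dimension if and only if $X\in \mathcal{B}_D(R)$, meaning that ${\bf R}\Hom_R(D,X)$ has bounded homology and the evaluation map $D\otimes_R^{\mathbf L}{\bf R}\Hom_R(D,X)\to X$ is a quasi-isomorphism. Since $G$ is G-injective, $G\in \mathcal{B}_D(R)$, so it is enough to show that $\Gamma_{\frak a}(G)\in \mathcal{B}_D(R)$.

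The vanishing $H^i_{\frak a}(G)=0$ for $i>0$ recalled at the start of the section gives a quasi-isomorphism $\Gamma_{\frak a}(G)\simeq {\bf R}\Gamma_{\frak a}(G)$, so I work throughout with the derived functor. For the first Bass-class condition I would invoke the standard identity
\[
{\bf R}\Hom_R(D,{\bf R}\Gamma_{\frak a}(G))\;\simeq\;{\bf R}\Gamma_{\frak a}\bigl({\bf R}\Hom_R(D,G)\bigr),
\]
valid because $D$ has finitely generated homology. The right-hand side is homologically bounded: ${\bf R}\Hom_R(D,G)$ is bounded since $G\in \mathcal{B}_D(R)$, and ${\bf R}\Gamma_{\frak a}$ has cohomological dimension at most $\dim R$. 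For the evaluation condition I would use the \v{C}ech-complex description ${\bf R}\Gamma_{\frak a}(-)\simeq {\bf R}\Gamma_{\frak a}(R)\otimes_R^{\mathbf L}(-)$ to move ${\bf R}\Gamma_{\frak a}$ past the tensor product, obtaining
\[
D\otimes_R^{\mathbf L}{\bf R}\Hom_R(D,{\bf R}\Gamma_{\frak a}(G))\;\simeq\;{\bf R}\Gamma_{\frak a}(R)\otimes_R^{\mathbf L}\bigl(D\otimes_R^{\mathbf L}{\bf R}\Hom_R(D,G)\bigr),
\]
which collapses via $G\in \mathcal{B}_D(R)$ to ${\bf R}\Gamma_{\frak a}(R)\otimes_R^{\mathbf L}G\simeq {\bf R}\Gamma_{\frak a}(G)\simeq \Gamma_{\frak a}(G)$, as required. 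Combining the two checks yields $\Gamma_{\frak a}(G)\in \mathcal{B}_D(R)$, hence finite G-injective dimension.

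The main technical obstacle is the commutation ${\bf R}\Hom_R(D,{\bf R}\Gamma_{\frak a}(-))\simeq {\bf R}\Gamma_{\frak a}({\bf R}\Hom_R(D,-))$. Though standard, it requires a careful derived-category argument, typically by representing ${\bf R}\Gamma_{\frak a}$ by the \v{C}ech complex and exploiting that $D$ can be replaced by a complex whose terms are finitely generated, so that $\Hom_R(D,-)$ commutes with the filtered colimit defining $\Gamma_{\frak a}$. Once this compatibility is in hand the rest of the argument is a formal manipulation in $D(R)$.
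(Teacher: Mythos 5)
Your argument is correct, and its skeleton coincides with the paper's: both proofs rest on the vanishing $H^i_{\frak a}(G)=0$ for $i>0$ (from [S1, Theorem 3.1]), which identifies $\Gamma_{\frak a}(G)$ with ${\bf R}\Gamma_{\frak a}(G)$ in $D(R)$, together with the fact that ${\bf R}\Gamma_{\frak a}$ preserves finiteness of Gorenstein injective dimension over a ring with a dualizing complex. The difference is in how that second ingredient is obtained and used. The paper simply cites [CFH, Theorem 5.9] to get ${\rm Gid}_R({\bf R}\Gamma_{\frak a}(G))<\infty$, then chooses a bounded complex of G-injectives quasi-isomorphic to ${\bf R}\Gamma_{\frak a}(G)$ and uses the concentration of homology in degree zero to splice it into an explicit finite G-injective resolution of $\Gamma_{\frak a}(G)$. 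You instead unpack the cited black box: you verify Bass-class membership of ${\bf R}\Gamma_{\frak a}(G)$ directly, via the commutation ${\bf R}\Hom_R(D,{\bf R}\Gamma_{\frak a}(-))\simeq{\bf R}\Gamma_{\frak a}({\bf R}\Hom_R(D,-))$ (legitimate, by tensor-evaluation with the \v{C}ech complex and the degreewise finiteness of $D$) and the \v{C}ech description of ${\bf R}\Gamma_{\frak a}$ — which is essentially how [CFH] prove their Theorem 5.9 — and then you conclude by applying the characterization ``finite Gid $\Leftrightarrow$ Bass class'' directly to the module $\Gamma_{\frak a}(G)$. This buys you a self-contained argument and quietly sidesteps the truncation step the paper glosses over (a bounded-below complex of G-injectives computing ${\bf R}\Gamma_{\frak a}(G)$ must first be arranged to live in cohomological degrees $0$ to $t$ before it yields a resolution); the cost is that you must take on faith, or prove, the naturality of the evaluation morphism under the various identifications, which you correctly flag as the one nontrivial compatibility.
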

\begin{proof}
It follows from [CFH, Theorem 5.9] that Gid$_R({\bf R}\Gamma_{\frak
a}(G))<\infty$. In view of [S1, Theorem 3.1], for each $i>0$ we have
$H_{\frak a}^i(G)=0$. Then for each $i>0$, we have $H^i({\bf
R}\Gamma_{\frak a}(G))=H_{\frak a}^i(G)=0$. On the other hand, since
Gid$_R({\bf R}\Gamma_{\frak a}(G))<\infty$, there exists a
non-negative integer $t$ and the following complex of $R$-modules
$${\bf G}=0\To G^0\To G^1\To\dots\To G^t\To 0$$ such that each
$G^i$ is G-injective and we have the quasi isomorphism of complexes
${\bf R}\Gamma_{\frak a}(G)\simeq{\bf G}$. Therefore $H^0({\bf
G})\cong \Gamma_{\frak a}(G)$ and $H^i({\bf G})=0$ for all $i>0$.
This fact implies that the complex
$$0\To \Gamma_{\frak a}(G)\To G^0\To G^1\To\dots\To G^t\To 0$$ is
acyclic and so it is a finite G-injective resolution of
$\Gamma_{\frak a}(G)$. Therefore Gid$_R(\Gamma_{\frak
a}(G))<\infty$.
\end{proof}

\medskip

\begin{theorem}
Let $R$ be a commutative Noetherian ring of Krull dimension $d$
admitting a dualizing complex $D$ and let $\frak a$ be an ideal of
$R$. If $G$ is a G-injective $R$-module, then $\Gamma_{\frak a}(G)$
is G-injective.
\end{theorem}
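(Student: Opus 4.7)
The plan is to upgrade the finite Gorenstein injective dimension from Proposition~3.1 to the value zero by constructing a \emph{complete} injective resolution of $\Gamma_{\frak a}(G)$ directly out of one for $G$. Since $G$ is Gorenstein injective, it admits a totally acyclic complex of injective $R$-modules
\[
\mathbf{E}\colon \ \cdots\to E^{-1}\to E^{0}\to E^{1}\to\cdots
\]
with $G=\ker(E^{0}\to E^{1})$ and with $\mathrm{Hom}_R(J,\mathbf{E})$ acyclic for every injective $R$-module $J$. First I would apply $\Gamma_{\frak a}$ termwise. Since $R$ is Noetherian, each $E^{i}$ decomposes as a direct sum of indecomposables $E(R/\frak p)$, and $\Gamma_{\frak a}$ retains the summands with $\frak a\subseteq\frak p$ while killing the others, so each $\Gamma_{\frak a}(E^{i})$ is again injective. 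The differentials of $\mathbf{E}$ are $R$-linear and therefore send $\frak a$-torsion elements to $\frak a$-torsion elements, so $\Gamma_{\frak a}(\mathbf{E})$ is a genuine complex of injective $R$-modules.

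Next I would verify the acyclicity of $\Gamma_{\frak a}(\mathbf{E})$. Every cocycle $Z^{i}=\ker(E^{i}\to E^{i+1})$ of $\mathbf{E}$ is Gorenstein injective, since a suitable shift of $\mathbf{E}$ serves as its complete injective resolution. Hence by [S1, Theorem~3.1] one has $H_{\frak a}^{j}(Z^{i})=0$ for every $j>0$, and the same vanishing holds for the injective modules $E^{i}$. Applying the long exact local cohomology sequence to each short exact sequence $0\to Z^{i}\to E^{i}\to Z^{i+1}\to 0$ therefore yields a short exact sequence
\[
0\to\Gamma_{\frak a}(Z^{i})\to\Gamma_{\frak a}(E^{i})\to\Gamma_{\frak a}(Z^{i+1})\to 0,
\]
and splicing these identifies $\Gamma_{\frak a}(G)$ with the zeroth cocycle of an acyclic complex $\Gamma_{\frak a}(\mathbf{E})$ of injective modules.

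The decisive input is then the Iyengar--Krause theorem: over a Noetherian ring admitting a dualizing complex, every acyclic complex of injective modules is automatically totally acyclic, i.e.\ $\mathrm{Hom}_R(J,-)$-exact for every injective $J$. Applied to $\Gamma_{\frak a}(\mathbf{E})$, this promotes the complex just constructed to a complete injective resolution of $\Gamma_{\frak a}(G)$, so $\Gamma_{\frak a}(G)$ is Gorenstein injective in the sense of Definition~2.2. I expect the main obstacle in any hands-on attempt to be precisely the verification of $\mathrm{Hom}_R(J,-)$-exactness, since non-$\frak a$-torsion injectives can map non-trivially into $\frak a$-torsion modules (for instance $\mathrm{Hom}_{\mathbb{Z}}(\mathbb{Q},\mathbb{Z}_{p^{\infty}})\neq 0$). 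The dualizing-complex hypothesis is exactly what bypasses this difficulty, most cleanly through Iyengar--Krause.
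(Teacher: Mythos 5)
Your construction of the complex $\Gamma_{\frak a}(\mathbf{E})$ is sound: each $\Gamma_{\frak a}(E^i)$ is indeed injective, and since every cocycle of $\mathbf{E}$ is G-injective, [S1, Theorem 3.1] gives the acyclicity of $\Gamma_{\frak a}(\mathbf{E})$ exactly as you argue; this is also how the paper produces the left half of a resolution of $\Gamma_{\frak a}(G)$. The problem is your ``decisive input.'' The Iyengar--Krause theorem does \emph{not} say that over a Noetherian ring with a dualizing complex every acyclic complex of injectives is totally acyclic; it says that this property holds if and only if the ring is Gorenstein (it is precisely their characterization of Gorenstein rings among Noetherian rings admitting a dualizing complex). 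Since Theorem 3.2 concerns an arbitrary Noetherian ring with a dualizing complex, invoking this result either silently assumes $R$ Gorenstein (in which case every module is G-injective and there is nothing to prove) or is false as stated. You correctly identified the $\mathrm{Hom}_R(J,-)$-exactness of $\Gamma_{\frak a}(\mathbf{E})$ as the real obstacle, but the tool you chose does not remove it.

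The paper closes this gap by a different mechanism: by Proposition 3.1 each kernel $\Gamma_{\frak a}(K_i)$ has finite G-injective dimension, and by [CFH, Theorem 6.8] any module $N$ of finite G-injective dimension satisfies ${\rm Gid}_RN={\rm depth}\,R_{\frak p}-{\rm width}_{R_{\frak p}}N_{\frak p}\leq d$ for some prime $\frak p$, so ${\rm Gid}_R\Gamma_{\frak a}(K_i)\leq d$ \emph{uniformly}. Dimension-shifting $d$ steps along the exact sequence $\cdots\To\Gamma_{\frak a}(E_1)\To\Gamma_{\frak a}(E_0)\To\Gamma_{\frak a}(G)\To 0$ then yields ${\rm Ext}_R^i(E,\Gamma_{\frak a}(G))=0$ for all $i>0$ and all injective $E$, and Holm's characterization [H, Theorem 2.22] converts this, together with the existence of the left injective resolution, into G-injectivity of $\Gamma_{\frak a}(G)$. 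If you wish to retain your complete-resolution formulation, you would still need an argument of this kind to verify $\mathrm{Hom}_R(J,-)$-exactness: the dualizing complex enters through the finiteness and boundedness statements of [CFH], not through an automatic passage from acyclic to totally acyclic complexes of injectives.
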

\begin{proof}
As $G$ is G-injective, there is an exact sequence of $R$-modules
$$\dots\To E_1\To E_0\To G\To 0$$ such that each $E_i$ is
injective and $K_i={\rm Ker}(E_i\To E_{i-1})$ is G-injective.
Application of the functor $\Gamma_{\frak a}(-)$ to the above exact
sequence and using [S1, Theorem 3.1], we get the following exact
sequence
$$\dots\To \Gamma_{\frak a}(E_1)\To \Gamma_{\frak a}(E_0)\To
\Gamma_{\frak a}(G)\To 0.$$ According to Proposition 3.1, we
have Gid$_R(\Gamma_{\frak a}(K_i))<\infty$ for each $i$. On the
other hand, if $N$ is any $R$-module of finite G-injective
dimension, in view of [CFH, Theorem 6.8] there exist a prime ideal
$\frak p$ of $R$ such that Gid$_RN={\rm depth}R_{\frak p}-{\rm
width}_{R_{\frak p}}N_{\frak p}$. This fact implies that the
following inequalities
$${\rm Gid}_RN\leq {\rm depth}R_{\frak p}\leq {\rm dim}R_{\frak
p}={\rm hight}\frak p\leq d.$$ Thus Gid$_R(\Gamma_{\frak
a}(K_i))\leq d$ for each $i$. Now, let $E$ be any injective
$R$-module. In view of [H, Theorem 2.22] for each $i>d$, there are
the following isomorphisms $$0={\rm Ext}_R^i(E,\Gamma_{\frak
a}(K_{d-1}))\cong {\rm Ext}_R^{i-1}(E,\Gamma_{\frak
a}(K_{d-2}))\cong\dots\cong {\rm Ext}_R^{i-d}(E,\Gamma_{\frak
a}(G)).$$ So for each $i>0$, we have ${\rm
Ext}_R^{i}(E,\Gamma_{\frak a}(G))=0$. Now, it follows from [H,
Theorem 2.22] that $\Gamma_{\frak a}(G)$ is G-injective.
\end{proof}
\medskip

\begin{corollary}
Let $R$ be a commutative Noetherian ring of Krull dimension $d$
admitting a dualizing complex $D$, let $\frak a$ be any ideal of $R$ and let $X\in D_{\sqsubset}(R)$. Then ${\rm
Gid}_R{\bf R}\Gamma_{\frak a}(X)\leq {\rm Gid}_R X$.
\end{corollary}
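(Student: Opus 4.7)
The plan is to reduce to the nontrivial case $\Gid_R X = n < \infty$ and then transport a witnessing G-injective resolution of $X$ through $\Gamma_{\frak a}$, relying crucially on Theorem 3.2 to preserve G-injectivity termwise and on [S1, Theorem 3.1] to ensure $\Gamma_{\frak a}$ computes the derived functor on such a resolution.

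First I would fix a complex $B$ of G-injective $R$-modules with $B \simeq X$ in $D(R)$ that realizes the bound $\sup\{l \in \mathbb{Z} \mid B_{-l} \neq 0\} \leq n$ (Definition 2.10). Since $X \in D_{\sqsubset}(R)$ is homologically bounded on the appropriate side and has finite G-injective dimension, $B$ may be chosen bounded, supported in the finite range prescribed by $n$. Applying $\Gamma_{\frak a}$ termwise, Theorem 3.2 ensures that each $\Gamma_{\frak a}(B_l)$ is again G-injective, so $\Gamma_{\frak a}(B)$ is a complex of G-injectives with the same degree support as $B$.

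Next I would establish the quasi-isomorphism $\mathbf{R}\Gamma_{\frak a}(X) \simeq \Gamma_{\frak a}(B)$ in $D(R)$. By [S1, Theorem 3.1] one has $H^i_{\frak a}(G) = 0$ for all $i > 0$ and every G-injective module $G$; thus each $B_l$ is $\Gamma_{\frak a}$-acyclic. For a bounded complex of $\Gamma_{\frak a}$-acyclic objects the standard hyperhomology (Cartan--Eilenberg) spectral sequence collapses, giving $\Gamma_{\frak a}(B) \simeq \mathbf{R}\Gamma_{\frak a}(B) \simeq \mathbf{R}\Gamma_{\frak a}(X)$. Combined with the previous paragraph, $\mathbf{R}\Gamma_{\frak a}(X)$ is represented by a complex of G-injectives concentrated in the same degrees that certify $\Gid_R X \leq n$, hence $\Gid_R \mathbf{R}\Gamma_{\frak a}(X) \leq n = \Gid_R X$.

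The main technical point (and the only nonformal step) is the claim that $\Gamma_{\frak a}(B)$ represents $\mathbf{R}\Gamma_{\frak a}(B)$. This reduces to the acyclicity input from [S1, Theorem 3.1]; once that is in hand, the rest is bookkeeping about degrees in Definition 2.10. Note that the hypothesis that $R$ admits a dualizing complex is used only indirectly, through Theorem 3.2 (to keep G-injectivity preserved by $\Gamma_{\frak a}$), so no additional structure on $D$ is needed in the argument itself.
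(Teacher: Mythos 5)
Your argument is correct, but it routes through a different resolution than the paper does. The paper takes an ordinary injective resolution $\mathcal{I}$ of $X$, notes that $\Gamma_{\frak a}(\mathcal{I})$ is by definition an injective resolution computing ${\bf R}\Gamma_{\frak a}(X)$ (so no acyclicity lemma is needed there), and then applies Theorem 3.2 only to the cosyzygies $K_{-i}={\rm Ker}(I_{-i}\to I_{-(i+1)})$ for $i\geq t$, which are G-injective because $\Gid_RX=t$; left-exactness of $\Gamma_{\frak a}$ identifies $\Gamma_{\frak a}(K_{-i})$ with the corresponding cosyzygy of $\Gamma_{\frak a}(\mathcal{I})$, giving the bound. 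You instead choose a bounded complex $B$ of G-injectives witnessing $\Gid_RX=n$, apply Theorem 3.2 to every term, and then must pay for this with the hyperhomology lemma that a bounded complex of $\Gamma_{\frak a}$-acyclic modules (acyclicity supplied by [S1, Theorem 3.1]) computes ${\bf R}\Gamma_{\frak a}$ -- a standard but genuinely extra ingredient that the paper's route avoids entirely. What your version buys in exchange is that the degree bound is immediate from the support of $\Gamma_{\frak a}(B)$, whereas the paper's truncation argument implicitly also needs the vanishing $H_l({\bf R}\Gamma_{\frak a}(X))=0$ for $l<-t$ (essentially Theorem 3.4) to conclude that the soft truncation at degree $-t$ still represents the complex; your proof sidesteps that point. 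Both arguments use the same two external inputs (Theorem 3.2 and [S1, Theorem 3.1]) and both are sound.
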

\begin{proof}
If Gid$_RX=\infty$, the inequality is clear. Let Gid$_RX=t$ and let
$$\mathcal{I}:=0\To I_0\To I_{-1}\To\dots\To I_{-t}\To I_{-(t+1)}\To \dots$$ be an injective resolution for
$X$. Then $K_{-i}={\rm Ker}(I_{-i}\To I_{-(i+1)})$, $i$-th cosyzygy of $\mathcal{I}$, is G-injective for each $i\geq t$. We note that
${\bf R}\Gamma_{\frak a}(X)\simeq\Gamma_{\frak a}(\mathcal{I})$ and
hence $\Gamma_{\frak a}(\mathcal{I})$ is an injective resolution of
${\bf R}\Gamma_{\frak a}(X)$. On the other hand, it follows from
Theorem 3.2 that $\Gamma_{\frak a}(K_{-i})$ is G-injective for
all $i\geq t$ and so this implies that Gid$_R{\bf R}\Gamma_{\frak
a}(X)\leq t$.
\end{proof}

\medskip

\begin{theorem}
Let $R$ be a commutative Noetherian ring, let $\frak a$ be any ideal of $R$ and let $X\in D_{\sqsubset}(R)$. Then
$-{\rm inf}{\bf R}\Gamma_{\frak a}(X)\leq {\rm Gid}_R X$.
\end{theorem}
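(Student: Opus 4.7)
The plan is to reduce $X$ to a bounded complex of G-injectives and exploit the fact that G-injective modules are $\Gamma_{\fa}$-acyclic. If $\Gid_R X = \infty$ the inequality is vacuous, so assume $t := \Gid_R X < \infty$. By Definition 2.10, there is a complex $B \in C_{\sqsubset}(R)$ isomorphic to $X$ in $D(R)$ with every $B_l$ G-injective and $B_l = 0$ for $l < -t$. Being simultaneously left-bounded (from membership in $C_{\sqsubset}(R)$) and bounded below (from finiteness of Gid), $B$ is genuinely bounded.

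The crucial input is the fact recalled just before Proposition 3.1, namely [S1, Theorem 3.1]: for any commutative Noetherian ring $R$, any ideal $\fa$, and any G-injective module $G$, one has $H^i_{\fa}(G) = 0$ for all $i > 0$. Thus every term of $B$ is $\Gamma_{\fa}$-acyclic. A standard acyclic-resolution argument --- via a Cartan--Eilenberg resolution together with the hypercohomology spectral sequence, or by induction on the length of $B$ using a brutal truncation and the long exact sequence of ${\bf R}\Gamma_{\fa}$ applied to the resulting distinguished triangle --- then yields ${\bf R}\Gamma_{\fa}(B) \simeq \Gamma_{\fa}(B)$ in $D(R)$, and hence ${\bf R}\Gamma_{\fa}(X) \simeq \Gamma_{\fa}(B)$.

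Since $\Gamma_{\fa}(B)_l = \Gamma_{\fa}(B_l) = 0$ for $l < -t$, we conclude $H_l({\bf R}\Gamma_{\fa}(X)) = 0$ for $l < -t$, so $\inf {\bf R}\Gamma_{\fa}(X) \geq -t$ and therefore $-\inf {\bf R}\Gamma_{\fa}(X) \leq t = \Gid_R X$. The main technical point is the acyclic-resolution identification, which relies on $B$ being genuinely bounded; this boundedness is guaranteed precisely because finiteness of the G-injective dimension forces $B_l$ to vanish for all sufficiently negative $l$, while $C_{\sqsubset}(R)$ already ensures vanishing for sufficiently positive $l$. Everything else is bookkeeping about the supports of $B$ and $\Gamma_{\fa}(B)$.
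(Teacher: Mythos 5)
Your proof is correct and rests on the same key input as the paper's, namely [S1, Theorem 3.1] (G-injective modules have vanishing higher local cohomology), but the route through it is genuinely different. The paper takes an honest injective resolution $\mathcal{I}$ of $X$ --- which represents ${\bf R}\Gamma_{\frak a}(X)$ by definition --- notes that the $t$-th cosyzygy $K_{-t}$ is G-injective, and uses [S1, Theorem 3.1] to see that the tail $0\to\Gamma_{\frak a}(I_{-t})\to\Gamma_{\frak a}(I_{-(t+1)})\to\cdots$ stays exact (it is an injective resolution of $\Gamma_{\frak a}(K_{-t})$), so $\Gamma_{\frak a}(\mathcal{I})$ has no homology below degree $-t$. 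You instead work with the bounded complex $B$ of G-injectives furnished by Definition 2.10 and invoke the general principle that a suitably bounded complex of $\Gamma_{\frak a}$-acyclic objects computes ${\bf R}\Gamma_{\frak a}$, reading the conclusion off from the support of $\Gamma_{\frak a}(B)$. Your version avoids the (standard but unproved in the paper) fact that ${\rm Gid}_RX=t$ forces the $t$-th cosyzygy of an injective resolution to be G-injective, at the cost of the heavier (also standard) acyclic-resolution machinery; the paper's version never leaves the injective resolution and needs only the exactness of one tail. Both are sound. One small point in your favour: the paper's closing assertion ``$H_i({\bf R}\Gamma_{\frak a}(X))=0$ for all $i\leq -t$'' should read $i<-t$, since the homology in degree $-t$ need not vanish, whereas your bookkeeping gives the correct range and still yields $-\inf{\bf R}\Gamma_{\frak a}(X)\leq t$.
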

\begin{proof}
If Gid$_RX=\infty$, the inequality is clear. Let Gid$_RX=t$
and let $$\mathcal{I}:=0\To I_0\To I_{-1}\To\dots\To I_{-t}\To I_{-(t+1)}\To \dots$$ be an injective resolution for
$X$. Then $K_{-i}={\rm Ker}(I_{-i}\To I_{-(i+1)})$, $i$-th cosyzygy
of $\mathcal{I}$, is G-injective for each $i\geq t$. As $-\inf X\leq
t$, the complex $$0\To I_{-t}\To I_{-(t+1)}\To\dots$$ is an
injective resolution for $K_{-t}$. On the other hand, it follows
from [S1, Theorem 3.1] that $$0\To \Gamma_{\frak a}(I_{-t})\To
\Gamma_{\frak a}(I_{-(t+1)})\To\dots$$ is an injective resolution
for $\Gamma_{\frak a}(K_{-t})$. Therefore $H_i({\bf R}\Gamma_{\frak
a}(X))=0$ for all $i\leq -t$ which this implies the assertion.
\end{proof}

\medskip
\hspace{0.5cm} The authors in [ET2, Theorem 2.5] proved that over a
commutative Noetherian local ring, an $R$-module $M$ is G-injective
if and only if it is cotorsion and ${\rm Hom}_R(\hat{R},M)$ is
G-injective as an $\hat{R}$-module. This fact leads us to the
following two lemmas:

\medskip
\begin{lemma}
Let $(R,\frak m)$ be a local ring of Krull dimension $d$ and $M$ be
an Artinian $R$-module. Then $M$ is a G-injective $R$-module if and
only if it is G-injective as $\hat{R}$-module.
\end{lemma}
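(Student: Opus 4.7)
The plan is to reduce this to the criterion the authors quote from \emph{[ET2, Theorem 2.5]}, which says that over a commutative Noetherian local ring an $R$-module $M$ is G-injective if and only if $M$ is cotorsion and $\mathrm{Hom}_R(\hat R,M)$ is G-injective as an $\hat R$-module. The two facts I need about an Artinian module $M$ over the local ring $(R,\frak m)$ are: (a) $M$ is $\frak m$-torsion, equivalently $M=\Gamma_{\frak m}(M)$, so that $M$ carries a canonical $\hat R$-module structure and the evaluation map $\mathrm{Hom}_R(\hat R,M)\To M$, $f\mapsto f(1)$, is an isomorphism of $\hat R$-modules; and (b) every Artinian module over a commutative Noetherian ring is cotorsion (a standard fact; see e.g.\ Xu's book on flat covers).

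First I would record (a): since $R$ is Noetherian local and $M$ Artinian, $\mathrm{Supp}_R(M)\subseteq\{\frak m\}$, hence $M=\Gamma_{\frak m}(M)$. Consequently the action of $R$ on $M$ extends uniquely and continuously to an action of $\hat R$, and the natural map $M\To \mathrm{Hom}_R(\hat R,M)$ sending $x$ to the morphism $r\mapsto rx$ is an isomorphism of $\hat R$-modules (its inverse is evaluation at $1$). Next I would record (b): Artinian $R$-modules are cotorsion, so in particular $M$ is cotorsion.

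With these two inputs in place, the equivalence is immediate from \emph{[ET2, Theorem 2.5]}. For the forward direction, if $M$ is G-injective as an $R$-module, then $\mathrm{Hom}_R(\hat R,M)$ is G-injective as an $\hat R$-module by that theorem, and via the isomorphism in (a) this is precisely the statement that $M$ is G-injective over $\hat R$. Conversely, if $M$ is G-injective as an $\hat R$-module, then, identifying $M$ with $\mathrm{Hom}_R(\hat R,M)$ via (a), the hypothesis of \emph{[ET2, Theorem 2.5]} is satisfied (cotorsion from (b), $\hat R$-G-injectivity of $\mathrm{Hom}_R(\hat R,M)$ from the identification), so $M$ is G-injective as an $R$-module.

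The main obstacle, such as it is, is the bookkeeping in (a): one must check that the canonical $\hat R$-module structure on $M$ (coming from $\frak m$-torsion) is \emph{the same} structure transported through $\mathrm{Hom}_R(\hat R,M)\cong M$, so that ``G-injective over $\hat R$'' on the two sides of the isomorphism really coincides. Once this naturality is in place, the argument is essentially a one-line invocation of \emph{[ET2, Theorem 2.5]}.
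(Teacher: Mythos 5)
Your overall strategy coincides with the paper's: both proofs hinge on [ET2, Theorem 2.5] together with the isomorphism $\mathrm{Hom}_R(\hat R,M)\cong M$ for Artinian $M$, and the logical skeleton of the two directions is identical. The differences lie in how the two auxiliary facts are handled, and one of them is a real soft spot in your write-up. For the cotorsion hypothesis needed in the converse direction, you invoke the general fact that Artinian modules over a Noetherian local ring are cotorsion (true, via Matlis duality and pure-injectivity of modules of the form $\mathrm{Hom}_R(-,E)$), whereas the paper derives it from the hypothesis itself: it takes a left G-injective $\hat R$-resolution of $M$, uses that injective $\hat R$-modules restrict to injective $R$-modules, dimension-shifts $\mathrm{Ext}^1_R(F,M)$ up to degree $d+1$, and kills it with the Raynaud--Gruson bound $\mathrm{pd}_RF\leq d$. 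Your route is shorter but leans on an external fact; the paper's is self-contained and in fact gives the stronger conclusion that $M$ is strongly cotorsion.

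The soft spot is your item (a). You assert that evaluation at $1$ inverts $x\mapsto(\hat r\mapsto \hat r x)$, but that is exactly the nontrivial point: one must show every $R$-linear map $f:\hat R\to M$ is determined by $f(1)$, i.e.\ is $\hat R$-linear for the canonical structure. The naive torsion argument only shows that $f(\hat r)-\hat r f(1)$ lies in $\bigcap_n\frak m^nM$, and this intersection need not vanish for Artinian modules --- already $\frak m E(R/\frak m)=E(R/\frak m)$ when $\dim R>0$. You flag this as ``bookkeeping'' but do not carry it out. The paper closes this gap cleanly: since $M$ is Artinian it fits in an exact sequence $0\to M\to E(R/\frak m)^{t_0}\to E(R/\frak m)^{t_1}$; applying the left-exact functor $\mathrm{Hom}_R(\hat R,-)$, using the known isomorphism $\mathrm{Hom}_R(\hat R,E(R/\frak m))\cong E(R/\frak m)$, and comparing the two left-exact sequences via the five lemma yields $\mathrm{Hom}_R(\hat R,M)\cong M$ compatibly with the $\hat R$-structures. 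With that argument (or an equivalent one, e.g.\ via $\mathrm{Hom}_R(\hat R/R,M)=0$) supplied, your proof is correct.
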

\begin{proof}
As $M$ is Artinian, there exits an exact sequence of $R$-modules
$$0\To M\To E(R/\frak m)^{t_0}\To E(R/\frak m)^{t_1}$$ such that
$t_0$ and $t_1$ are positive integer. Applying the functor ${\rm
Hom}_R(\hat{R},-)$ to this exact sequence and using the isomorphism
Hom$_R(\hat{R}, E(R/\frak m))\cong E(R/\frak m)$ induce the
following exact sequence of $R$-modules
$$0\To{\rm
Hom}_R(\hat{R},M)\To E(R/\frak m)^{t_0}\To E(R/\frak m)^{t_1}.$$ The
five lemma implies the isomorphism ${\rm Hom}_R(\hat{R},M)\cong M$.
Now we prove the assertion. At first assume that $M$ is a
G-injective $R$-module. Using the isomorphism ${\rm
Hom}_R(\hat{R},M)\cong M$ and [ET2, Theorem 2.5] we conclude that
$M$ is a G-injective $\hat{R}$-module. Conversely, assume that $M$
is a G-injective $\hat{R}$ module. So there exists an exact sequence
of injective $\hat{R}$-modules $$\dots\To E_1\To E_0\To M\To 0$$
such that each $K_i={\rm Ker}(E_i\To E_{i-1})$ is a G-injective
$\hat{R}$-module. Let $F$ be an $R$-module of finite flat dimension.
Since each injective $\hat{R}$-module is an injective $R$-module,
there is the following isomorphisms $${\rm Ext}_R^1(F,M)\cong {\rm
Ext}_R^2(F,K_0)\cong{\rm Ext}_R^3(F,K_1)\cong \dots\cong{\rm
Ext}_R^{d+1}(F,K_{d-1}).$$ It follows from [RG, p. 84] that
pd$_RF\leq d$. This fact implies that $M$ is a strongly cotorsion
$R$-module and so is a cotorsion $R$-module. Now the result follows
by [ET2, Theorem 2.5].
\end{proof}

\medskip

\medskip
\begin{lemma}
Let $(R,\frak m)$ be a local ring and $M$ be an Artinian $R$-module.
Then ${\rm Gid}_RM={\rm Gid}_{\hat{R}}M$.
\end{lemma}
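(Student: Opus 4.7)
The plan is to exploit the fact that, since $M$ is Artinian, a minimal injective resolution of $M$ over $\hat R$ is built from finite direct sums of the common injective envelope $E := E_R(R/\mathfrak m) = E_{\hat R}(\hat R/\mathfrak m\hat R)$, and its cosyzygies remain Artinian. This lets me apply Lemma 3.5 to each cosyzygy and transfer G-injectivity between the two rings.

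Concretely, I would first fix a minimal injective resolution
$$0\to M\to E^0\to E^1\to\cdots$$
of $M$ as an $\hat R$-module. Since $M$ is Artinian, each Bass number $\mu^i(\mathfrak m\hat R,M) = \dim_k\mathrm{Ext}^i_{\hat R}(k,M)$ is finite, so each $E^i \cong E^{t_i}$ with $t_i \in \mathbb N_0$, and hence every $E^i$ is Artinian; by induction on $i$ each cosyzygy $K^i = \mathrm{Ker}(E^i\to E^{i+1})$ is Artinian as well. Because $E$ is simultaneously injective over $R$, the very same complex serves as an injective resolution of $M$ viewed as an $R$-module.

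Now suppose ${\rm Gid}_{\hat R}M = n < \infty$. By [H, Theorem 2.22] applied over $\hat R$, the $n$-th cosyzygy $K^n$ is G-injective as an $\hat R$-module, and Lemma 3.5 (applicable because $K^n$ is Artinian) then lifts this to G-injectivity over $R$. The truncated sequence
$$0\to M\to E^0\to\cdots\to E^{n-1}\to K^n\to 0$$
therefore witnesses ${\rm Gid}_R M \le n$. The reverse inequality ${\rm Gid}_{\hat R}M \le {\rm Gid}_R M$ is symmetric: a minimal injective resolution of $M$ over $R$ has Artinian cosyzygies by the same reasoning, and Lemma 3.5 together with [H, Theorem 2.22] again yields the bound. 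The case where either G-injective dimension is infinite follows, since finiteness on either side would by the above force finiteness on the other.

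The main obstacle is really only the preservation of Artinianness along the cosyzygies (needed to invoke Lemma 3.5 uniformly at each stage) and the identification of $E(R/\mathfrak m)$ as an injective over both rings; both are routine consequences of the finite socle of an Artinian module and of Matlis duality. Once these are in place, the equality is purely a matter of feeding Lemma 3.5 into the standard cosyzygy characterization of G-injective dimension.
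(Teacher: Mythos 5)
Your proposal is correct and follows essentially the same route as the paper's proof: resolve the Artinian module by finite direct sums of $E(R/\frak m)$ (injective over both rings), observe that the cosyzygies remain Artinian, and feed the relevant cosyzygy into Lemma 3.5 to transfer Gorenstein injectivity between $R$ and $\hat{R}$ in both directions. The paper obtains the resolution with an Artinian G-injective cosyzygy by citing [EJ2, Lemma 4.2] rather than via minimal injective resolutions, Bass numbers and [H, Theorem 2.22], but this is only a difference in bookkeeping, not in the underlying argument.
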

\begin{proof}
Let Gid$_RM=s$. Since $M$ is Artinian, using [EJ2, Lemma 4.2], there
exists an exact sequence of $R$-modules $$0\To M\To E(R/\frak
m)^{t_0}\To E(R/\frak m)^{t_1}\To \dots$$ such that each $t_i$ is
positive integer and $K^s={\rm Ker}(E(R/\frak m)^{t_s}\To E(R/\frak
m)^{t_{s+1}})$ is an Artinian G-injective $R$-module. It follows
from Lemma 3.5 that $K^s$ is G-injective as an
$\hat{R}$-module and so Gid$_{\hat{R}}M\leq s$. Conversely assume
that Gid$_{\hat{R}}M=r$. By a similar proof and using Lemma 3.5, we deduce that Gid$_RM\leq r$, and hence the assertion
follows.
\end{proof}

\medskip

\begin{theorem}
Let $(R,\frak m)$ be a local ring, and let $M$ be a finitely
generated $R$-module. Then ${\rm Gid}_R{\bf R}\Gamma_{\frak
m}(M)<\infty$ if and only if ${\rm Gid}_{\hat{R}}(M\otimes_R\hat{R})<\infty$.
\end{theorem}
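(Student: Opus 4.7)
The plan is to reduce both directions to a comparison of ${\rm Gid}_R$ and ${\rm Gid}_{\hat R}$ for the single complex ${\bf R}\Gamma_{\frak m}(M)$, exploiting that its cohomology modules are Artinian, and then to handle the passage from the local cohomology complex back to $\hat M$ in the nontrivial direction by means of local duality over $\hat R$.

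First I would record two preliminary facts. (i) Since $M$ is finitely generated, flat base change along $R\to\hat R$ furnishes canonical isomorphisms $H^i_{\frak m}(M)\cong H^i_{\frak m\hat R}(\hat M)$, each an Artinian module; hence ${\bf R}\Gamma_{\frak m}(M)$ and ${\bf R}\Gamma_{\frak m\hat R}(\hat M)$ represent canonically the same object of $D(\hat R)$, with bounded Artinian cohomology. (ii) Lemma 3.6 upgrades from modules to bounded complexes of Artinian modules: by induction on the number of nonzero cohomologies, using soft truncations together with Lemma 3.6 applied to each cohomology and the two-out-of-three principle for finite G-injective dimension in triangles, one deduces ${\rm Gid}_R Y={\rm Gid}_{\hat R}Y$ for every bounded complex $Y$ of Artinian $R$-modules. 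Applied to $Y={\bf R}\Gamma_{\frak m}(M)$, this gives
\[
{\rm Gid}_R{\bf R}\Gamma_{\frak m}(M)={\rm Gid}_{\hat R}{\bf R}\Gamma_{\frak m\hat R}(\hat M).
\]

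For the ``if'' direction, by Cohen's structure theorem $\hat R$ is a quotient of a complete regular local ring, so $\hat R$ admits a dualizing complex. Corollary 3.3 then applies over $\hat R$ with the ideal $\frak m\hat R$, yielding ${\rm Gid}_{\hat R}{\bf R}\Gamma_{\frak m\hat R}(\hat M)\le{\rm Gid}_{\hat R}\hat M<\infty$, and (ii) transports this to ${\rm Gid}_R{\bf R}\Gamma_{\frak m}(M)<\infty$. For the ``only if'' direction, after (i)--(ii) it remains to show ${\rm Gid}_{\hat R}{\bf R}\Gamma_{\frak m\hat R}(\hat M)<\infty$ $\Rightarrow$ ${\rm Gid}_{\hat R}\hat M<\infty$. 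With $D$ the normalized dualizing complex of $\hat R$ and $E=E(\hat R/\frak m\hat R)$, Grothendieck local duality gives ${\bf R}\Gamma_{\frak m\hat R}(\hat M)\simeq{\rm Hom}_{\hat R}({\bf R}{\rm Hom}_{\hat R}(\hat M,D),E)$. Matlis dualizing and using that the functor $(-)^{\vee}={\rm Hom}_{\hat R}(-,E)$ sends complexes of finite G-injective dimension to complexes of finite G-flat dimension over the complete local ring $\hat R$ yields ${\rm Gfd}_{\hat R}{\bf R}{\rm Hom}_{\hat R}(\hat M,D)<\infty$. Foxby's dualizing equivalence attached to $D$ interchanges $\widetilde{\mathcal{GF}}$ and $\widetilde{\mathcal{GI}}$; applied to the biduality $\hat M\simeq{\bf R}{\rm Hom}_{\hat R}({\bf R}{\rm Hom}_{\hat R}(\hat M,D),D)$ (valid because $\hat M$ is finitely generated and $D$ is dualizing), this finally gives ${\rm Gid}_{\hat R}\hat M<\infty$.

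The main obstacle is this last chain of transfers in the ``only if'' direction: one must justify at the level of derived-category complexes (not merely individual modules) both that Matlis duality swaps $\widetilde{\mathcal{GI}}$ with $\widetilde{\mathcal{GF}}$ over $\hat R$ and that the derived functor ${\bf R}{\rm Hom}_{\hat R}(-,D)$ implements an equivalence between these two classes. Points (i) and (ii), together with the ``if'' direction, are essentially routine bookkeeping on top of Lemmas 3.5--3.6 and Corollary 3.3.
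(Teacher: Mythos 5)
Your high-level strategy is the same as the paper's: reduce everything to $\hat R$ by exploiting that the local cohomology of $M$ is Artinian, then use the dualizing complex of $\hat R$. The two endpoints are fine in outline (Corollary 3.3 over $\hat R$ for the ``if'' direction; your local-duality argument for the ``only if'' direction is essentially a re-derivation of [CFH, Theorem 5.9], which the paper simply cites over $\hat R$, so that part can be closed by citation even though you leave the derived-category details open). The genuine gap is your step (ii), on which \emph{both} directions of your proof rest. The proposed induction via soft truncations and the two-out-of-three principle cannot get started: from ${\rm Gid}_R Y<\infty$ one cannot conclude that the individual homology modules $H_i(Y)$ have finite G-injective dimension, which is exactly what you would need before Lemma 3.6 can be applied to them and the triangle reassembled over $\hat R$. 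Finiteness of Gid for a complex does not descend to its homologies: over a non-Gorenstein local ring the residue field $k$ has infinite G-injective dimension, yet the two-term complex $E(k)\to E(E(k)/k)$ is a bounded complex of Artinian injective modules, hence of G-injective dimension at most $1$, whose top homology is $k$. So the inductive step is unavailable in either direction. (The transfer statement you assert is in fact true for the complex at hand, but not for the reason you give.)

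The repair is the paper's cosyzygy argument, which works with a chosen representative rather than with homology. Take the minimal injective resolution $\mathcal I$ of $M$; each $\Gamma_{\frak m}(I_{-j})$ is a finite direct sum of copies of $E(R/\frak m)\cong E_{\hat R}(\hat R/\frak m\hat R)$, so $\Gamma_{\frak m}(\mathcal I)$ is simultaneously a complex of injective $R$-modules and of injective $\hat R$-modules representing both ${\bf R}\Gamma_{\frak m}(M)$ and ${\bf R}\Gamma_{\frak m\hat R}(M\otimes_R\hat R)$, and its cosyzygies $\Gamma_{\frak m}(K_{-i})$ are Artinian. The condition ${\rm Gid}\le t$ is detected by G-injectivity of the $t$-th cosyzygy of such a resolution, and Lemma 3.5 transfers G-injectivity of an Artinian module between $R$ and $\hat R$; this gives the finiteness transfer in both directions without ever needing the homology modules to have finite Gid. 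With step (ii) replaced by this argument (and [CFH, Theorem 5.9] cited for the last implication over $\hat R$), your outline becomes the paper's proof.
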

\begin{proof}
Let ${\rm Gid}{\bf R}\Gamma_{\frak m}(M)=t$ and let
$$\mathcal{I}:=0\To I_0\To I_{-1}\To\dots\To I_{-t}\To I_{-(t+1)}\To
\dots$$ be an injective resolution of $M$. Then there is the
following quasi isomorphisms
$${\bf R}\Gamma_{\frak
m\hat{R}}(M\otimes_R\hat{R})\simeq\Gamma_{\frak
m\hat{R}}(\mathcal{I}\otimes_R\hat{R})\simeq\Gamma_{\frak
m}(\mathcal{I}).$$ Let $K_{-i}={\rm Ker}(I_{-i}\To I_{-(i+1)})$,
$i$-th cosyzygy of $\mathcal{I}$. Since ${\rm Gid}_R{\bf R}\Gamma_{\frak m}(M)=t$, we conclude that
 $\Gamma_{\frak m}(K_{-i})$ is a G-injective artinian $R$-module for
each $i\geq t$. It follows from Lemma 3.6 that $\Gamma_{\frak
m}(K_{-i})$ is G-injective $\hat{R}$-module for each $i\geq t$ and
hence Gid$_{\hat{R}}{\bf R}\Gamma_{\frak m\hat{R}}(M\otimes_R\hat{R})\leq t$.
Now, since $\hat{R}$ admits a dualizing complex, the result follows
from [CFH, Theorem 5.9]. Conversely, assume that ${\rm Gid}(M\otimes_R\hat{R})=s$ and let
$$\mathcal{J}:=0\To J_0\To J_{-1}\To\dots\To J_{-s}\To J_{-(s+1)}\To
\dots$$ be an $\hat{R}$- injective resolution of $M\otimes_R\hat{R}$. It follows from [EJ2, Lemma 4.2] that $L_{-i}={\rm Ker}(J_{-i}\To J_{-(i+1)})$,
 $i$-th cosyzygy of $\mathcal{J}$, is G-injective for each $j\geq s$. On the other hand, since $M\otimes_R\hat{R}$ is finitely generated, $\Gamma_{\frak m\hat{R}}(L_{-i})$ is artinian for each $i$. Thus, since $\hat{R}$ admits a dualizing complex, in view of Theorem 3.2, the module $\Gamma_{\frak m\hat{R}}(L_{-s})$ is G-injective artinian $\hat{R}$-module and so is $G$-injective $R$-module by Lemma 3.6. Therefore, Gid$_R(\Gamma_{\frak m\hat{R}}(\mathcal{J}))<\infty$. Now the result follows by the following quasi isomorphisms
$${\bf R}\Gamma_{\frak m}(M)\simeq{\bf R}\Gamma_{\frak m\hat{R}}(M\otimes_R\hat{R})\simeq \Gamma_{\frak m\hat{R}}(\mathcal{J}).$$
\end{proof}

\medskip

\begin{corollary}
Let $(R,\frak m)$ be a local ring and let $M$ be a non-zero cyclic
$R$-module such that ${\rm Gid}_R{\bf R}\Gamma_{\frak m}(M)<\infty$.
Then $R$ is Gorenstein.
\end{corollary}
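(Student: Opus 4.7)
The plan is to reduce to the complete local case via Theorem 3.7, invoke a Gorenstein analogue of the Bass conjecture to secure a dualizing module, and then use the Foxby equivalence together with a minimal-generator count to force the dualizing module to be cyclic.

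Write $M = R/I$ with $I \subsetneq R$. Theorem 3.7 gives ${\rm Gid}_{\hat{R}}(\hat{R}/I\hat{R}) < \infty$, and $\hat{R}/I\hat{R}$ is a non-zero cyclic $\hat{R}$-module. Since $R$ is Gorenstein if and only if $\hat{R}$ is, I may replace $R$ by $\hat{R}$ and assume $R$ is complete local; then $R$ admits a dualizing complex.

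Next, I invoke the Gorenstein analogue of the Bass conjecture: if a local ring admits a non-zero finitely generated module of finite G-injective dimension, then it is Cohen--Macaulay. Hence $R$ is Cohen--Macaulay and admits a dualizing module $\omega$. By Remark 2.12, $R/I \in \mathcal{J}_0(R)$. Setting $P = \Hom_R(\omega, R/I)$, the Foxby equivalence between $\mathcal{J}_0(R)$ and $\mathcal{G}_0(R)$ gives $R/I \cong \omega \otimes_R P$ with $P$ a non-zero finitely generated $R$-module.

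Finally, denoting by $\mu_R(-)$ the minimal number of generators, Nakayama's lemma yields
$$1 = \mu_R(R/I) = \mu_R(\omega)\cdot\mu_R(P),$$
so $\mu_R(\omega) = 1$ and $\omega$ is cyclic. The isomorphism $R \cong \Hom_R(\omega,\omega)$ forces $\omega$ to be a faithful $R$-module (any $r \in R$ with $r\omega = 0$ must act as zero on $\Hom_R(\omega,\omega)\cong R$, hence $r = 0$), and a faithful cyclic $R$-module is isomorphic to $R$. Thus $\omega \cong R$, so $R$ is Gorenstein.

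The main obstacle in this plan is the Cohen--Macaulay reduction in the second step: establishing that the existence of a non-zero finitely generated module of finite G-injective dimension forces $R$ to be Cohen--Macaulay. This is a nontrivial Gorenstein analogue of the classical Bass conjecture, whose proof relies on amplitude estimates and properties of dualizing complexes.
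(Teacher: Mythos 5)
Your first reduction is exactly the paper's: pass from $M=R/I$ to the non-zero cyclic $\hat{R}$-module $M\otimes_R\hat{R}\cong \hat{R}/I\hat{R}$ via Theorem 3.7, and use that $R$ is Gorenstein iff $\hat{R}$ is. At that point the paper simply cites [FFr, Theorem 4.5] (``a local ring admitting a non-zero cyclic module of finite G-injective dimension is Gorenstein'') and is done. You instead set out to re-derive that theorem, and this is where the proposal diverges — and where it has a genuine gap.

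The gap is the step you yourself flag: the claim that a non-zero finitely generated module of finite G-injective dimension forces $R$ to be Cohen--Macaulay. You invoke this as a known ``Gorenstein analogue of the Bass conjecture'' but neither prove it nor point to a citable source, and it is not among the results available in this paper ([CFH, Theorem 6.8] gives the Chouinard-type formula ${\rm Gid}_RN=\sup\{{\rm depth}\,R_{\frak p}-{\rm width}_{R_{\frak p}}N_{\frak p}\}$, which yields a Bass formula ${\rm Gid}_RM={\rm depth}\,R$ for finitely generated $M$, but does not by itself give Cohen--Macaulayness). This Cohen--Macaulay reduction is precisely the hard content of [FFr, Theorem 4.5]: Foxby and Frankild's proof of that theorem proceeds by an amplitude argument on the dualizing complex to show ${\rm amp}\,D=0$, i.e.\ that $R$ is Cohen--Macaulay, and only then deduces $\omega\cong R$. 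So as written your argument black-boxes the heart of the result it is trying to reprove; the honest fix is to cite [FFr, Theorem 4.5] outright, as the paper does.

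That said, the second half of your argument — once Cohen--Macaulayness and a dualizing module $\omega$ are granted — is correct and is a nice elementary account of why the conclusion is ``Gorenstein'' rather than merely ``finite Gid'': Remark 2.12 puts $R/I$ in $\mathcal{J}_0(R)$, the Bass-class isomorphism $R/I\cong\omega\otimes_R\Hom_R(\omega,R/I)$ together with $k\otimes_R(\omega\otimes_RP)\cong(k\otimes_R\omega)\otimes_k(k\otimes_RP)$ gives $\mu_R(\omega)\mu_R(P)=1$, and faithfulness of $\omega$ (from $\Hom_R(\omega,\omega)\cong R$) upgrades ``$\omega$ cyclic'' to $\omega\cong R$. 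This is essentially the endgame of Foxby--Frankild's own proof, so your route is best described not as an alternative proof but as a partial unpacking of the citation the paper uses.
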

\begin{proof}
It is straightforward to show that if $M$ is a cyclic $R$-module, then
$M\otimes_R\hat{R}$ is a cyclic $\hat{R}$-module too. Now, it follows
from the theorem above that
Gid$_{\hat{R}}(M\otimes_R\hat{R})<\infty$. Thus, according to [FFr,
Theorem 4.5] the ring $\hat{R}$ is Gorenstein. Therefore, $R$ is
Gorenstein.
\end{proof}

\medskip

\hspace{0.4cm} It follows from [CFH, Theorem 5.9] that if $R$ is a
commutative Noetherian ring admitting a dualizing complex, $\frak a$
is any ideal contained in radical of $R$ and $M$ is a finitely
generated $R$-module, then Gfd$_R{\bf R}\Gamma_{\frak a}(M)<\infty$
implies that Gfd$_RM<\infty$. In the following theorem, with a
different proof we prove that if $(R,\frak m)$ is a local ring and
$\frak a=\frak m$, then the result is true without the condition
existence a dualizing complex for $R$

\medskip
\begin{theorem}
Let $(R,\frak m)$ be a local ring of Krull dimension $d$ and let $M$
be a finitely generated $R$-module. If ${\rm Gfd}_R{\bf
R}\Gamma_{\frak m}(M)<\infty$, then ${\rm Gfd}_RM<\infty$.
\end{theorem}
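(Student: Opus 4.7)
The plan is to reduce the statement to the complete local ring $\hat{R}$, which admits a dualizing complex, and then invoke the result of [CFH, Theorem 5.9] recalled in the paragraph preceding the theorem.

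\emph{Step 1 (ascent to $\hat{R}$).} I would first show that if ${\rm Gfd}_R{\bf R}\Gamma_{\frak m}(M)<\infty$, then ${\rm Gfd}_{\hat{R}}{\bf R}\Gamma_{\frak m\hat{R}}(M\otimes_R\hat{R})<\infty$. The flat base-change isomorphism
$${\bf R}\Gamma_{\frak m}(M)\otimes_R\hat{R}\simeq {\bf R}\Gamma_{\frak m\hat{R}}(M\otimes_R\hat{R})$$
holds because $\hat{R}$ is flat over $R$. Thus it suffices to note that tensoring a G-flat $R$-resolution of ${\bf R}\Gamma_{\frak m}(M)$ by the flat module $\hat{R}$ yields a resolution of the right-hand side whose modules remain G-flat over $\hat{R}$ (since $\hat{R}$ has finite flat dimension as an $R$-module, in fact flat dimension zero, G-flatness ascends).

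\emph{Step 2 (apply [CFH, Theorem 5.9] over $\hat{R}$).} The ring $\hat{R}$ is a complete Noetherian local ring and therefore admits a dualizing complex. Moreover $\frak m\hat{R}$ is contained in the Jacobson radical of $\hat{R}$, and $M\otimes_R\hat{R}$ is finitely generated over $\hat{R}$. The hypotheses of [CFH, Theorem 5.9] recalled just above are satisfied, so Step~1 immediately gives ${\rm Gfd}_{\hat{R}}(M\otimes_R\hat{R})<\infty$.

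\emph{Step 3 (descent to $R$).} Finally, I would descend: for the finitely generated $R$-module $M$, the finiteness of ${\rm Gfd}_{\hat{R}}(M\otimes_R\hat{R})$ must imply the finiteness of ${\rm Gfd}_RM$. If $P_\bullet\to M$ is an $R$-projective resolution and $K={\rm Ker}(P_{s-1}\to P_{s-2})$ with $s={\rm Gfd}_{\hat{R}}(M\otimes_R\hat{R})$, then $P_\bullet\otimes_R\hat{R}$ is an $\hat{R}$-projective resolution of $M\otimes_R\hat{R}$, and the corresponding syzygy $K\otimes_R\hat{R}$ is G-flat over $\hat{R}$. Using the test characterization of G-flat modules through the vanishing of ${\rm Tor}_i^R(-,J)$ for all $J$ of finite injective dimension, together with the flat base-change isomorphism ${\rm Tor}_i^R(K,J)\otimes_R\hat{R}\cong {\rm Tor}_i^{\hat{R}}(K\otimes_R\hat{R},J\otimes_R\hat{R})$ and the faithful flatness of $\hat{R}$, one concludes that $K$ is G-flat over $R$, i.e.\ ${\rm Gfd}_RM\le s$.

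\emph{Main obstacle.} Steps~1 and~2 are essentially formal; the delicate point is the descent in Step~3. Without the finite generation of $M$, descent of finite G-flat dimension along $R\to\hat{R}$ can fail, so one must genuinely use the fact that $M\otimes_R\hat{R}$ inherits an $\hat{R}$-projective resolution from one of $M$ over $R$, and then detect G-flatness by a test class that behaves well under faithfully flat base change. The cleanest verification uses the Holm-style characterization of G-flat modules via vanishing of ${\rm Tor}$ against modules of finite injective dimension, combined with faithful flatness of $\hat{R}$.
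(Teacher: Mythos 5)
Your overall route --- pass to $\hat{R}$, apply [CFH, Theorem~5.9] over the complete ring, then descend --- coincides with the first half of the paper's proof: the paper likewise reduces to the complete case via the quasi-isomorphism ${\bf R}\Gamma_{\frak m}(M)\simeq {\bf R}\Gamma_{\frak m\hat{R}}(M\otimes_R\hat{R})$ and the descent result of Esmkhani--Tousi, and then (instead of quoting [CFH, Theorem~5.9]) gives an independent local-duality argument for the complete case; your Steps~1 and~2 are fine and your use of [CFH, Theorem~5.9] there is legitimate. The problem is Step~3, which you rightly single out as the delicate point but then justify with a criterion that is not valid. The ``test characterization'' you invoke --- that $K$ is G-flat as soon as ${\rm Tor}_i^R(J,K)=0$ for all $i>0$ and all $J$ of finite injective dimension --- is only an equivalence under the standing hypothesis that ${\rm Gfd}_RK$ is \emph{already known to be finite}: this is exactly how [H, Theorem~3.14] is stated, and without that hypothesis the implication is not available (indeed it is expected to fail in general; compare the Jorgensen--\c{S}ega examples of finitely generated modules over Artinian local rings whose relevant (co)homology vanishes in all positive degrees yet which are not totally reflexive, hence not G-flat). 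Since the entire point of the theorem is that $R$ itself need not admit a dualizing complex, you have no source for the a priori finiteness of ${\rm Gfd}_RK$, and Step~3 as written is circular.

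The conclusion of Step~3 is nevertheless true: descent of finite Gorenstein flat dimension along $R\to\hat{R}$ is precisely [ET1, Corollary~3.5], which is what the paper cites, and it is a genuine theorem rather than a formal base-change argument. In your particular setting one can also repair the step by hand: the syzygy $K$ is finitely generated, a finitely generated G-flat module over a Noetherian local ring is totally reflexive, and total reflexivity --- defined by the vanishing of ${\rm Ext}^i_R(K,R)$ and ${\rm Ext}^i_R({\rm Hom}_R(K,R),R)$ for $i>0$ together with biduality --- descends along the faithfully flat map $R\to\hat{R}$ because ${\rm Ext}$ of finitely generated modules commutes with flat base change. Either way, the descent requires an argument of this kind (or the citation of [ET1]), not the Tor-vanishing test.
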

\begin{proof}
As $M$ is finitely generated, there is a quasi isomorphism of
$R$-complexes ${\bf R}\Gamma_{\frak m}(M)\simeq {\bf R}\Gamma_{\frak
m\hat{R}}(M\otimes_R\hat{R})$. This fact and the assumption imply
that ${\rm Gfd}_{\hat {R}}{\bf R}\Gamma_{\frak
m\hat{R}}(M\otimes_R\hat{R})<\infty$. On the other hand in view of
[ET1, Corollary 3.5], if ${\rm
Gfd}_{\hat{R}}(M\otimes_R{\hat{R}})<\infty$, then ${\rm
Gfd}_RM<\infty$. So we may assume that $R$ is a complete local ring;
and hence $R$ admits a dualizing complex $C$. If we consider $n=d-{\rm
sup}C$, then $D=\Sigma^nC$ is a normalized dualizing complex for $R$, where $\Sigma^nC$
denotes the complex $C$ shifted (to the left) $n$ degrees
(see [F, 15.7]). According to the local duality theorem for local
cohomology there is a quasi isomorphism of $R$-complexes ${\bf
R}\Gamma_{\frak m}(M)\simeq \Sigma^{-d}{\rm Hom}_R({\rm
Hom}_R(M,D),E(R/\frak m))$. The assumption ${\rm Gfd}_R{\bf
R}\Gamma_{\frak m}(M)<\infty$ and [CFH, Theorem 5.3] imply that
${\rm Gid}_R(({\rm Hom}_R(M,D))<\infty$; and hence there is a
complex
$$\mathcal{G}=0\To G_0\To G_{-1}\To \dots\To G_{-t}\To 0$$ such that $t\in
\mathbb{N}_0$ and each $G_i$ is G-injective and there is the quasi
isomorphism ${\rm Hom}_R(M,D)\simeq\mathcal{G}$. It follows from [F,
15.10] that $M={\rm Hom}_R({\rm Hom}_R(M,D),D)$ and so in order to
prove our claim, it suffices to show that ${\rm Gfd}_R({\rm
Hom}_R({\rm Hom}_R(M,D),D))<\infty$. In view of [F, 6.15], there is
a quasi isomorphism of $R$-complexes ${\rm Hom}_R({\rm
Hom}_R(M,D),D)\simeq {\rm Hom}_R(\mathcal{G},D)$. Let $$D=0\To
I_0\To I_{-1}\To \dots\To I_{-n}\To 0$$ where $n\in\mathbb{N}_0$ and
each $I_i$ is injective. We note that for each $j\geq 0$, there is
${\rm Hom}_R(\mathcal{G},D)_j=\bigoplus_{i=0}^{t}{\rm
Hom}_R(G_{-i},I_{-i+j})$. It follows from [CFH, Theorem 5.3] that
each ${\rm Hom}_R(\mathcal{G},D)_j$ is G-flat. On the other hand
the quasi isomorphism $M={\rm Hom}_R({\rm Hom}_R(M,D),D)\simeq {\rm
Hom}_R(\mathcal{G},D)$ of $R$-complexes implies the following exact
sequence $R$-modules $$0\To{\rm Hom}_R(\mathcal{G},D)_s\To{\rm
Hom}_R(\mathcal{G},D)_{s-1}\To \dots\To{\rm Hom}_R(\mathcal{G},D)_
0\To M\To 0$$ where $s\leq t+n$. Thus Gfd$_RM<\infty$.
\end{proof}

\medskip

\begin{theorem}
Let $(R,\frak m)$ be a Cohen-Macaulay local ring of dimension $d$ and let $M$ be a
Cohen-Macaulay $R$-module of dimension $n$. If $H_{\frak m}^n(M)$ is
of finite G-injective dimension, then ${\rm Gid}_RH_{\frak
m}^n(M)=d-n.$ In particular, if $M$ is maximal Cohen-Macaulay, then
$H_{\frak m}^d(M)$ is G-injective.
\end{theorem}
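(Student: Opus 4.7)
The plan is to invoke the Chouinard-type formula [CFH, Theorem 6.8] already used in the proof of Theorem 3.2: for any $R$-module $N$ of finite G-injective dimension there exists $\frak{p}\in\Spec R$ with
$$\Gid_R N \;=\; \depth R_{\frak{p}} - {\rm width}_{R_{\frak{p}}} N_{\frak{p}},$$
and more generally the right-hand side realises $\Gid_R N$ as a supremum over all primes. I would apply this to $N:=H_{\frak{m}}^n(M)$. Because $M$ is finitely generated, $N$ is Artinian, so $\Supp_R N\subseteq\{\frak{m}\}$ and $N_{\frak{p}}=0$ for every $\frak{p}\neq\frak{m}$; the zero module has width $+\infty$, so those primes contribute $-\infty$ to the supremum and only $\frak{p}=\frak{m}$ matters. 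Since $R$ is Cohen-Macaulay of dimension $d$, $\depth R=d$, and the theorem reduces to proving the single identity ${\rm width}_R N = n$.

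To compute this width I would pass to the $\frak{m}$-adic completion. Because $N$ is Artinian, the natural map $N\to N\otimes_R\hat{R}$ is an isomorphism, so flat base change together with $k=R/\frak{m}=\hat{R}/\frak{m}\hat{R}$ gives $\Tor_i^R(k,N)\cong\Tor_i^{\hat{R}}(k,N)$ for every $i$, and hence ${\rm width}_R N = {\rm width}_{\hat{R}} N$. Now $\hat{R}$ is Cohen-Macaulay of dimension $d$ and admits a normalized dualizing module $\omega$, and local duality yields
$$N\;\cong\; H_{\frak{m}\hat{R}}^n(\hat{M})\;\cong\;\Hom_{\hat{R}}\bigl(\omega_M,\,E(R/\frak{m})\bigr),$$
where $\omega_M:=\Ext_{\hat{R}}^{d-n}(\hat{M},\omega)$ is the canonical module of the Cohen-Macaulay $\hat{R}$-module $\hat{M}$. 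In particular $\omega_M$ is Cohen-Macaulay of dimension $n$, so $\depth_{\hat{R}}\omega_M=n$.

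The final step trades the width of a Matlis dual for the depth of its predual: the Hom-Tor adjunction (using that $E(R/\frak{m})$ is injective and $k$ is finitely presented) supplies the natural isomorphism
$$\Tor_i^{\hat{R}}\bigl(k,\Hom_{\hat{R}}(\omega_M,E(R/\frak{m}))\bigr)\;\cong\;\Hom_{\hat{R}}\bigl(\Ext_{\hat{R}}^i(k,\omega_M),\,E(R/\frak{m})\bigr),$$
which by the faithfulness of Matlis duality vanishes iff $\Ext_{\hat{R}}^i(k,\omega_M)=0$. Consequently ${\rm width}_{\hat{R}} N = \depth_{\hat{R}}\omega_M = n$, which closes the calculation and yields $\Gid_R H_{\frak{m}}^n(M)=d-n$; the ``in particular'' assertion is then the case $n=d$, where $\Gid=0$ is exactly G-injectivity. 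The main obstacle, I expect, is organising the width computation cleanly: $R$ itself need not admit a dualizing module, so one must move to $\hat{R}$, verify that the width is preserved, realise $N$ via local duality as the Matlis dual of $\omega_M$, and finally convert ``width'' into ``depth'' through Hom-Tor adjunction. Each ingredient is standard, but the bookkeeping between the $R$- and $\hat{R}$-module structures must be carried out carefully.
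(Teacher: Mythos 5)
Your proof is correct, and it reaches the same core computation as the paper --- namely that, after completing and applying local duality, everything comes down to $\depth_{\hat R}\Ext^{d-n}_{\hat R}(\hat M,\omega)=n$, which follows from [BH, Theorem 3.3.10] --- but it gets there through a genuinely different pair of tools. The paper works on the predual side: it transfers the hypothesis across Matlis duality to get $\Gfd_{\hat R}\Ext^{d-n}_{\hat R}(\hat M,\omega)<\infty$ via [CFH, Theorem 5.3], applies the Auslander--Bridger-type formula for finitely generated modules ([H, Theorem 3.19] and [CFFr, Theorem 2.8]) to get the value $d-n$, and then converts back with [CFH, Theorem 5.3]. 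You stay on the dual (Artinian) side, using the Chouinard formula of [CFH, Theorem 6.8] together with the exchange ${\rm width}_{\hat R}\Hom_{\hat R}(X,E(R/\frak m))=\depth_{\hat R}X$; the two computations are equivalent, since the Auslander--Bridger formula for the canonical module is exactly the Chouinard formula read through Matlis duality. Your route avoids having to transport finiteness of the G-dimension across Matlis duality before computing, at the cost of the (routine) verification that only $\frak p=\frak m$ contributes to the supremum; for that step you should also note $H^n_{\frak m}(M)\neq 0$ (Grothendieck non-vanishing), so that the $\frak m$-term is the finite one. One organizational caveat: the version of the Chouinard formula cited as [CFH, Theorem 6.8] is proved for rings admitting a dualizing complex, and a Cohen--Macaulay local ring need not admit one; so you should first reduce to $\hat R$ --- using Lemma 3.6, exactly as the paper does, since $H^n_{\frak m}(M)$ is Artinian and hence $\Gid_R H^n_{\frak m}(M)=\Gid_{\hat R}H^n_{\frak m}(M)$ --- and only then invoke the formula. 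Since your argument already passes to $\hat R$ for the width computation, this reordering costs nothing.
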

\begin{proof}
As $H_{\frak m}^n(M)$ is Artinian, there are the following
isomorphisms $$H_{\frak m}^n(M)\cong H_{\frak
m}^n(M)\otimes_R\hat{R}\cong H_{\frak
m\hat{R}}^n(M\otimes_R\hat{R}).$$ We note that $M\otimes_R\hat{R}$
is a Cohen-Macaulay $\hat{R}$-module of dimension $n$ and by Lemma 3.6, we have Gid$_R(H_{\frak m}^n(M)={\rm
Gid}_{\hat{R}}(H_{\frak m}^n(M)\otimes_R\hat{R})= {\rm
Gid}_{\hat{R}}(H_{\frak m\hat{R}}^n(M\otimes_R\hat{R}))$. So we may
assume that $R$ is a complete local ring and hence $R$ has a dualizing module. Using the local duality
theorem for local cohomology, we have the isomorphism $H_{\frak
m}^n(M)\cong {\rm Hom}_R({\rm Ext}_R^{d-n}(M,\omega), E(R/\frak m))$
where $\omega$ is a dualizing module of $R$. It follows from
[BH, Theorem 3.3.10] that ${\rm Ext}_R^{d-n}(M,\omega)$ is a
Cohen-Macaulay $R$-module of dimension $n$ and by the previous
isomorphism and the assumption it has finite G-flat dimension. Now,
in view of [H1, Theorem 3.19] and [CFFr, Theorem 2.8], we have
Gfd$_R({\rm Ext}_R^{d-n}(M,\omega))={\rm depth}R-{\rm depth}({\rm
Ext}_R^{d-n}(M,\omega))=d-n.$ The fact that $E(R/\frak m)$ is
injective cogenerator and [CFH, Theorem 5.3] imply that
Gid$_R(H_{\frak m}^n(M))=d-n$. The second claim follows easily
because dim$M=d$.
\end{proof}

\medskip
\begin{theorem}
Let $(R,\frak m)$ be a Cohen-Macaulay local ring of dimension
$d$ and let $M$ be a Matlis reflexive strongly
torsion free $R$-module. If ${\rm Gfd}_RM<\infty$, then ${\rm Gfd}_{\hat{R}}\hat{M}<\infty$.
\end{theorem}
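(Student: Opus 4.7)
My plan is to work over the completion $\hat R$, which as a Cohen-Macaulay complete local ring admits a dualizing module $\omega$. By Remark 2.12, over $\hat R$ we have $\widetilde{\mathcal{GF}}=\mathcal{G}_0(\hat R)$, so it suffices to show that $\hat M$ lies in the Auslander class $\mathcal{G}_0(\hat R)$.

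The Matlis reflexivity of $M$ supplies the isomorphism $\hat M\cong M\otimes_R\hat R\cong M$ from Definition 2.4, identifying $\hat M$ with $M$ and endowing $M$ with a canonical $\hat R$-structure. I transfer the strongly torsion free property to $\hat R$ as follows: for any $\hat R$-module $F$ with $\fd_{\hat R}F<\infty$, the flatness of $\hat R$ over $R$ yields $\fd_R F<\infty$, so $\Tor_i^R(F,M)=0$ for $i\geq 1$, and the flat base change isomorphism $\Tor_i^{\hat R}(F,\hat M)\cong \Tor_i^R(F,M)$ gives $\Tor_i^{\hat R}(F,\hat M)=0$.

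From $\Gfd_R M<\infty$ I take a finite G-flat resolution $0\to F_t\to\cdots\to F_0\to M\to 0$ of $M$ over $R$ and apply $-\otimes_R\hat R$; flatness produces a finite exact sequence of $\hat R$-modules ending in $\hat M$. If each $F_i\otimes_R\hat R$ is G-flat over $\hat R$, we conclude $\Gfd_{\hat R}\hat M\leq t<\infty$. The main obstacle is ascent of G-flatness along the extension $R\to\hat R$: an injective $\hat R$-module need not be injective over $R$, so the totally acyclic complex of flat $R$-modules witnessing the G-flatness of $F_i$ may fail to remain totally acyclic after $-\otimes_R\hat R$. I would bridge this gap by verifying the three defining conditions of $\mathcal{G}_0(\hat R)$ for each $F_i\otimes_R\hat R$ directly. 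The Tor-vanishing $\Tor_j^{\hat R}(\omega,F_i\otimes_R\hat R)=0$ is computed via flat base change as $\Tor_j^R(\omega,F_i)$, for which the strongly torsion free data (propagated along the syzygies of the resolution via the long exact Tor sequence, using that G-flat modules have vanishing Tor against modules of finite flat dimension) supplies the needed vanishing; the Ext-vanishing $\Ext_{\hat R}^j(\omega,\omega\otimes_{\hat R}(F_i\otimes_R\hat R))=0$ and the natural isomorphism $F_i\otimes_R\hat R\to\Hom_{\hat R}(\omega,\omega\otimes_{\hat R}(F_i\otimes_R\hat R))$ are handled analogously, using the Matlis reflexive/flat-base-change structure to move the computations back over $R$. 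This places $\hat M$ in $\mathcal{G}_0(\hat R)$ and yields $\Gfd_{\hat R}\hat M<\infty$.
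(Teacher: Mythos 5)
Your argument breaks down at the very first step. You identify $\hat M$ with $M\otimes_R\hat R$ and hence with $M$ itself, but the natural map $M\otimes_R\hat R\to\hat M=\varprojlim M/\fm^nM$ is an isomorphism only for finitely generated modules, and a Matlis reflexive module is in general not finitely generated: by Enochs's result quoted in Definition 2.4 it is an extension of a finitely generated module by an Artinian one, and completion typically destroys the Artinian part. For instance, over a complete discrete valuation ring $R$ with maximal ideal $(t)$, the module $E=E(R/\fm)$ is Matlis reflexive and satisfies $tE=E$, so $\hat E=0$ while $E\otimes_R\hat R=E\neq 0$. Since your transfer of the strongly torsion free hypothesis to $\hat R$ and your base change of the G-flat resolution both rest on the identification $\hat M\cong M$, the whole argument collapses with it. Identifying what $\hat M$ actually is constitutes the real content of the theorem: the paper's proof begins by establishing the isomorphism $\hat M\cong\bigl(\Gamma_{\fm}(\check M)\bigr)\check{\phantom{.}}$ via Matlis duality and a limit interchange, and then works with $\Gamma_{\fm}(\check M)$.

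A secondary problem is that even granting your identification, the part of the argument you acknowledge as ``the main obstacle''---ascent of G-flatness along $R\to\hat R$---is only sketched, and the sketch conflates two different things: membership of $F_i\otimes_R\hat R$ in the Auslander class $\mathcal{G}_0(\hat R)$ requires Tor- and Ext-vanishing against the dualizing module $\omega$ of $\hat R$, which has finite \emph{injective} dimension over $\hat R$ but not finite flat dimension, so the strongly torsion free hypothesis (vanishing of Tor against modules of finite \emph{flat} dimension) gives you nothing about $\Tor_j^{\hat R}(\omega,F_i\otimes_R\hat R)$. The paper avoids this entirely by dualizing: from $\Gfd_RM<\infty$ it deduces $\Gid_R\check M<\infty$, uses [ET2, Theorem 2.7] to place $\check M$ in the Bass class $\mathcal{J}_0(\hat R)$, applies [S3, Proposition 4.9] to conclude $\Gamma_{\fm}(\check M)\in\mathcal{J}_0(\hat R)$, and then dualizes back with [CFH, Theorem 5.3] to obtain $\Gfd_{\hat R}\hat M<\infty$.
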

\begin{proof}
There are the following isomorphisms
$$\hat{M}\cong\underset{\underset{n}{\leftarrow}}{\mbox{lim}}M/{\frak
m}^nM\cong\underset{\underset{n}{\leftarrow}}{\mbox{lim}} (R/{\frak
m}^n\otimes_RM)\cong
\underset{\underset{n}{\leftarrow}}{\mbox{lim}}(R/{\frak
m}^n\otimes_R{\rm Hom}_R(\check{M}, E(R/\frak
m)))$$$$\cong\underset{\underset{n}{\leftarrow}}{\mbox{lim}}({\rm
Hom}_R({\rm Hom}_R(R/{\frak m}^n,\check{M}), E(R/\frak m)))$$$$\cong
{\rm Hom}_R(\underset{\underset{n}{\rightarrow}}{\mbox{lim}}{\rm
Hom}_R(R/{\frak m}^n,\check{M}), E(R/\frak m))\cong (\Gamma_{\frak
m}(\check{M})\check{)}$$ where $(-\check{)}={\rm Hom}_R(-,E(R/\frak
m))$ is the Matlis duality functor. One can easily show that $\check{M}$
is strongly cotorsion and Matlis reflexive. Application of [S2,
Corollary 3.3] implies that $\Gamma_{\frak m}(\check{M})$ is
strongly cotorsion and Matlis reflexive. On the other hand, since
$M$ is of finite G-flat dimension, one easily conclude that
$\check{M}$ is of finite G-injective dimension. The fact that
$\check{M}$ is a Matlis reflexive $R$-module implies the isomorphism
$\check{M}\cong \check{M}\otimes_R\hat{R}$ and since $\check{M}$ is
strongly cotorsion, we have ${\rm Ext}_R^i(\hat{R},\check{M})=0$ for
all $i>0$. Moreover, there are the following isomorphisms $${\rm
Hom}_R(\hat{R},\check{M})\cong{\rm Hom}_R(M\otimes_R\hat{R},
E(R/\frak m))\cong {\rm Hom}_R(M, E(R/\frak m))\cong \check{M}.$$
Thus it follows from [ET2, Theorem 2.7] that
$\check{M}\in\mathcal{J}_0(\hat{R})$. Therefore the equality
$\Gamma_{\frak m}(\check{M})=\Gamma_{\frak m\hat{R}}(\check{M})$ and
[S3, Proposition 4.9] imply that $\Gamma_{\frak m}(\check{M})\in
\mathcal{J}_0(\hat{R})$ and so $\Gamma_{\frak m}(\check{M})$ is of
finite G-injective dimension as $\hat{R}$-module by [EJ1,
Proposition 10.4.23]. Now, in view of [CFH, Theorem 5.3], we conclude that $(\Gamma_{\frak m}(\check{M})\check{)}$ is of finite
G-flat dimension as $\hat{R}$-module which this fact gives the assertion.
\end{proof}

\medskip

\begin{corollary}
Let $(R,\frak m)$ be a Cohen-Macaulay local ring of dimension
$d$ and let $M$ be a Matlis reflexive strongly
torsion free $R$-module. If ${\rm Gfd}_RM<\infty$, then ${\rm Gid}_R({\rm
Hom}_R(\hat{M},\omega))<\infty$, where $\omega$ is the dualizing module of $\hat{R}$.
\end{corollary}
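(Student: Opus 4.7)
The plan is to combine Theorem 3.11 with a duality converting finite G-flat dimension into finite G-injective dimension under $\Hom(-,\omega)$, and then to transfer the resulting bound from $\hat R$ down to $R$.

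First, applying Theorem 3.11 to $M$ yields $\Gfd_{\hat R}\hat M<\infty$. Since $\omega$ is the dualizing module of the Cohen-Macaulay local ring $\hat R$, it has finite injective dimension over $\hat R$. The next goal is to deduce $\Gid_{\hat R}\Hom_{\hat R}(\hat M,\omega)<\infty$. I would take a bounded G-flat resolution $\mathbf F\To\hat M$ of length $\Gfd_{\hat R}\hat M$ and a finite injective resolution $\omega\To\mathbf I$ of length $d$, and work with the total complex of $\Hom_{\hat R}(\mathbf F,\mathbf I)$. Each entry $\Hom_{\hat R}(F_i,I^j)$ has finite G-injective dimension over $\hat R$: for flat $F_i$ and injective $I^j$ it is even injective, and the general G-flat case is handled by [CFH, Theorem 5.3] exactly in the direction used at the end of the proof of Theorem 3.10. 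A dimension-shifting argument on this bounded total complex then yields the claim.

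Next, I would identify $\Hom_R(\hat M,\omega)=\Hom_{\hat R}(\hat M,\omega)$: any $R$-linear map from the $\frak m$-adically complete $\hat R$-module $\hat M$ into the finitely generated (hence $\frak m$-adically complete) $\hat R$-module $\omega$ is automatically continuous, and therefore $\hat R$-linear.

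Finally, I would transfer finite G-injective dimension from $\hat R$ down to $R$, in the spirit of Lemmas 3.5 and 3.6. Every injective $\hat R$-module is injective as an $R$-module, so an $\hat R$-injective resolution of $\Hom_R(\hat M,\omega)$ is automatically an $R$-injective resolution; the cosyzygies above the $\hat R$-bound are G-injective $\hat R$-modules, and one checks via [ET2, Theorem 2.5] (using strong cotorsion inherited from $\omega$ through $\Hom$) that they remain G-injective as $R$-modules, giving $\Gid_R\Hom_R(\hat M,\omega)<\infty$. The hard part will be precisely this final step: the relevant modules are not Artinian in general, so Lemma 3.6 does not apply directly, and the cotorsion hypothesis required for [ET2, Theorem 2.5] must be verified by hand, leveraging the strong-cotorsion/strong-torsion-free interplay already exploited in the proof of Theorem 3.11.
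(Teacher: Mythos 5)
Your overall architecture (prove finiteness of the G-injective dimension over $\hat R$ first, then descend to $R$) matches the paper's, but your middle step contains a genuine gap. The total complex of $\Hom_{\hat R}(\mathbf F,\mathbf I)$ is a bounded complex of G-injective $\hat R$-modules quasi-isomorphic to ${\bf R}\Hom_{\hat R}(\hat M,\omega)$, whose homology in degree $-i$ is $\Ext^i_{\hat R}(\hat M,\omega)$. Dimension shifting on that complex therefore tells you something about the \emph{complex} ${\bf R}\Hom_{\hat R}(\hat M,\omega)$; it only yields ${\rm Gid}_{\hat R}\Hom_{\hat R}(\hat M,\omega)<\infty$ if the homology is concentrated in degree $0$, i.e.\ if $\Ext^i_{\hat R}(\hat M,\omega)=0$ for all $i>0$. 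That vanishing is \emph{not} a consequence of ${\rm Gfd}_{\hat R}\hat M<\infty$ alone: for a regular element $x$ the module $R/(x)$ has finite (G-)flat dimension but $\Ext^1_{R}(R/(x),\omega)=\omega/x\omega\neq 0$, so your dimension-shifting step, as stated, proves nothing about $H_0$. The missing ingredient is precisely where the hypotheses you discard re-enter: the paper first invokes [S2, Theorem 3.2] (using that $M$ is Matlis reflexive and strongly torsion free) to get that $\hat M$ is a maximal Cohen--Macaulay $\hat R$-module, and then [S3, Theorem 4.14] to conclude $\Hom_{\hat R}(\hat M,\omega)\in\mathcal{J}_0(\hat R)=\widetilde{\mathcal{GI}}$. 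Your plan uses strong torsion-freeness only to feed Theorem 3.11, so it cannot close this hole without adding the MCM input (or a direct proof of the $\Ext$-vanishing).

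The descent step is also not actually carried out; you flag it yourself, but note that the route you sketch (cosyzygy-by-cosyzygy application of [ET2, Theorem 2.5], requiring each cosyzygy to be cotorsion over $R$) is harder than necessary. The paper works with the module $\Hom_R(\hat M,\omega)$ directly: it shows $\Hom_R(\hat R,\Hom_R(\hat M,\omega))\cong\Hom_R(\hat R\otimes_R\hat M,\omega)\cong\Hom_R(\hat M,\omega)$ using Matlis reflexivity of $\hat M$, computes $\Ext^i_R(\hat R,\Hom_R(\hat M,\omega))=0$ for $i>0$ by applying $\Hom_R(\hat R,-)$ to an $\hat R$-injective resolution (which is already an $R$-injective resolution and on which $\Hom_R(\hat R,-)$ acts as the identity), and then quotes the finite-dimension criterion [ET2, Theorem 2.7] once. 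Your continuity argument identifying $\Hom_R(\hat M,\omega)$ with $\Hom_{\hat R}(\hat M,\omega)$ is fine and equivalent to the paper's tensor identity. So: same skeleton, but the key reduction over $\hat R$ is unsupported as written, and the final transfer needs to be done along the lines of [ET2, Theorem 2.7] rather than Theorem 2.5.
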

\begin{proof}
 It follows from [S2, Theorem 3.2] that $\hat{M}$ is a maximal
Cohen-Macaulay $\hat{R}$-module and ${\rm Hom}_{\hat{R}}(\hat{M},\omega)\in
\mathcal{J}_0(\hat{R})$ by [S3, Theorem
4.14]. Since $\hat{M}$ is a Matlis
reflexive $\hat{R}$-module, we have an isomorphism
$\hat{M}\otimes_R\hat{R}\cong\hat{M}$ which implies the isomorphism ${\rm
Hom}_{\hat{R}}(\hat{M},\omega)\cong {\rm Hom}_{R}(\hat{M},\omega)$.
On the other hand, since $\hat{M}$ is Matlis reflexive, there are the following isomorphisms
$${\rm Hom}_R(\hat{R},{\rm Hom}_{R}(\hat{M},\omega))\cong {\rm Hom}_R(\hat{R}\otimes_R\hat{M},\omega)\cong {\rm Hom}_{R}(\hat{M},\omega).$$
Moreover by considering this fact that any $\hat{R}$-injective resolution of ${\rm Hom}_{R}(\hat{M},\omega)$ is an $R$-injective resolution of ${\rm Hom}_{R}(\hat{M},\omega)$ and for any injective $\hat{R}$-module $E$ there is an isomorphism ${\rm Hom}_R(\hat{R},E)\cong E$, we deduce that ${\rm Ext}_R^i(\hat{R}, {\rm Hom}_{R}(\hat{M},\omega))=0$ for all $i>0$. Now, the assertion follows by [ET2, Theorem 2.7].
\end{proof}

\end{document}